\newcommand{\lvt}{\left|\kern-1.35pt\left|\kern-1.3pt\left|}
\newcommand{\rvt}{\right|\kern-1.3pt\right|\kern-1.35pt\right|}
\newtheorem{thm}{Theorem}[section]
\newtheorem{cor}[thm]{Corollary}
\newtheorem{lem}[thm]{Lemma}
\newtheorem{prop}[thm]{Proposition}
\newtheorem{defn}[thm]{Definition}
\theoremstyle{remark}
 \def\la{{\langle}}
 \def\ra{{\rangle}}
 \def\sph{{\mathbb{S}^{d-1}}}
 \def\d{{\mathrm{d}}}
 \def\E{{\mathrm{e}}}
 \def\a{{\alpha}}
 \def\b{{\beta}}
 \def\g{{\gamma}}
 \def\k{{\kappa}}
 \def\t{{\theta}}
 \def\l{{\lambda}}
 \def\o{{\omega}}
 \def\s{\sigma}
 \def\la{{\langle}}
 \def\ra{{\rangle}}
 \def\kb{{\mathbf k}}
 \def\xb{{\mathbf x}}
 \def\CH{{\mathcal H}}
 \def\CP{{\mathcal P}}
 \def\CV{{\mathcal V}}
 \def\BB{{\mathbb B}}
 \def\NN{{\mathbb N}}
 \def\RR{{\mathbb R}}
 \def\SS{{\mathbb S}}
 \def\TT{{\mathbb T}}
 \def\VV{{\mathbb V}}
\def\lla{\langle{\kern-2.5pt}\langle}      
\def\rra{\rangle{\kern-2.5pt}\rangle}
\def\f{\frac}
\def\vc#1{\mbox{\boldmath$#1$\unboldmath}}
\begin{document}

\title{Orthogonal Polynomials in and on a Quadratic Surface of Revolution} 

\author{Sheehan Olver}
\address{Department of Mathematics\\
Imperial College\\
 London \\
 United Kingdom  }\email{s.olver@imperial.ac.uk}

\author{Yuan Xu}
\address{Department of Mathematics\\ University of Oregon\\
    Eugene, Oregon 97403-1222.}\email{yuan@uoregon.edu}


\date{\today}
\keywords{Orthogonal polynomials, quadratic surface of revolution, ,cubature rule, approximation method}
\subjclass{42C05, 42C10, 65D15, 65D32}

\begin{abstract} 
 We present explicit constructions of orthogonal polynomials inside quadratic bodies of revolution, including cones, hyperboloids, and paraboloids. We also construct orthogonal polynomials on the surface of quadratic surfaces of revolution, generalizing spherical harmonics to the surface of a cone, hyperboloid, and paraboloid. We use this construction to develop cubature and fast approximation methods. 
\end{abstract}

\maketitle

\section{Introduction}
\setcounter{equation}{0}

We consider orthogonal polynomials on the boundary or inside the domain bounded by quadratic surfaces
of revolution in $\RR^{d+1}$ for $d \ge 2$. For $\RR^3$, quadratic surfaces of revolution arise from rotating 
curves on the plane around an axis. The most well--known example is the unit sphere. Other examples 
include the cone with its apex at the origin,
$$
     \Omega = \{(x,y,t): x^2+y^2 = a t^2\}, \qquad a =\tan \t,
$$ 
where $\t$ is the half angle of the cone, the elliptic paraboloid 
$$
     \Omega = \{(x,y,t): x^2+y^2 = a t\}, \qquad 0\le t \le b, \quad a >0,
$$ 
and the circular hyperboloid,
$$
     \Omega  = \{(x,y,t): x^2+ y^2 = a (t^2 + c^2)\}, \qquad a >0,
$$ 
where $t$ is in an interval, either finite or infinite, and $t$ can also be an union of two intervals. We shall 
consider orthogonal polynomials on each quadratic surface and in the domain bounded by such a surface.

The work is a continuation of \cite{OX1, OX2}, in which orthogonal polynomials on a generic quadratic 
curve in $\RR^2$ are studied. When the curve is the unit circle, orthogonal polynomials are trigonometric 
polynomials in angular variable. For other quadratic curves equipped with a fairly general weight function, 
the space of orthogonal polynomials of degree $n$ with respect to the weight function has dimension 2, 
and a basis is constructed explicitly in \cite{OX2}. In the current paper, we are interested in the case of 
quadratic surfaces in $\RR^{d+1}$ for $d \ge 2$. The orthogonal structure in and on these domains is 
far more complicated. Our study is modelled after the structure on the unit sphere and on the unit ball. 

In the case of the unit sphere $\sph = \{\vc x \in \RR^d: \|\vc x\| =1\}$, spherical harmonics are orthogonal with 
respect to the inner product 
$$
  \la f, g\ra_\sph = \frac{1}{\o_d}\int_\sph f(\vc x) g(\vc x) \d \s(\vc x),
$$
where $\d\s$ is the surface measure and $\o_d$ denotes the surface area of $\sph$. On the unit ball
$\BB^d = \{\vc x \in \RR^d: \|\vc x\| \le 1\}$, classical orthogonal polynomials are orthogonal with respect to the 
inner product 
$$
  \la f, g\ra_\mu = b_\mu \int_{\BB^d} f(\vc x) g(\vc x) (1-\|\vc x\|^2)^{\mu - \f12} \d \vc x,
$$
where $\mu > -\f12$ and $b_\mu$ is a normalization constant so that $\la 1, 1 \ra_\mu=1$. 
For each quadratic surface of revolution in $\RR^{d+1}$, we shall define a family of weight functions and
construct an explicit basis of orthogonal basis of polynomials with respect to the inner product defined via
a weight function in the family. Furthermore, we shall do the same for a family of weight functions defined
on the domain bounded by each quadratic surface of revolution. 

As an application, we shall consider approximation of functions by expansion in the orthogonal polynomial basis. This leverages recent progress by Slevinsky on fast spherical harmonics transforms \cite{SlSH}, and its adaption to expansion in orthogonal polynomials on disks and triangles \cite{SlTr}. This is the first step in a wider program of developing spectral element methods on such exotic domains. 

The paper is organized as follows. We recall basic results on orthogonal polynomials in the next section. 
Quadratic surface of revolution are defined in the third section. Orthogonal polynomials on a quadratic 
surface and inside the domain bounded by the surface are discussed in Sections 4 and 5, respectively. 
Product type cubature formulas on these domain are stated in Section 6. Finally, fast algorithm for 
evaluating orthogonal expansions of functions on the cone is provided in Section 7.

\medskip
\noindent
{\bf Acknowledgment}. The second author would like to thank the Isaac Newton Institute for Mathematical 
Sciences, Cambridge, for support and hospitality during the programme ``Approximation, sampling and 
compression in data science" where part of the work on this paper was undertaken. This work is supported
by EPSRC grant no EP/K032208/1. 

\section{Preliminary}
\setcounter{equation}{0}

We recall basics on orthogonal polynomials of several variables. Let $\Omega$ be a domain in $\RR^d$
with positive measure. Let $W$ be a non--degenerate weight function on $\Omega$, that is, $W$ is nonnegative,
has finite moments $\int_\Omega \vc x^\a W(\vc x) \d \vc x$ for all $\a \in \NN_0^d$ and $\int_\Omega W(\vc x) \d \vc x > 0$. 
Define the inner product 
$$
  \la f, g\ra = \int_{\Omega} f(\vc x) g(\vc x) W(\vc x) \d \vc x. 
$$
Then orthogonal polynomials with respect to this inner product are well-defined. Let $\Pi_n^d$ be the space of 
orthogonal polynomials of degree $n$ in $d$-variables. We call a polynomial of degree $P \in \Pi_n^d$ orthogonal 
with respect to the inner product $\la \cdot,\cdot\ra$ if $\la P, Q\ra  = 0$ for all $Q \in \Pi_{n-1}^d$. Let $\CV_n^d$ 
be the space of orthogonal polynomials of total degree at most $n$. Then 
\begin{equation}\label{eq:dimVn}
   \dim \CV_n^d = \dim \CP_n^d = \binom{n+d -1}{d} \quad \hbox{and} \quad \dim \Pi_n^d =  \binom{n+d}{d}.
\end{equation}
where $\CP_n^d$ denote the space of homogeneous polynomials of degree $n$ in $d$-variables. In contrast 
to the case of one-variable, the space $\CV_n^d$ can have many different bases for $d \ge 2$. In particular, 
the elements of the basis of $\CV_n^d$ may not be orthogonal to each other. A basis is called orthogonal if its 
elements are mutually orthogonal and it is called orthonormal if, in additional, $\la P,P\ra =1$ for each element 
$P$ in the basis. 

The above discussion of orthogonal polynomials excludes the unit sphere $\sph$, since it has measure zero. 
Let $w$ be a nondegenerate nonnegative weight function on the sphere. For an inner product defined on
the unit sphere 
$$
    \la f, g\ra = \int_{\sph} f(\vc x) g(\vc x) w(\vc x) \d \vc x,  
$$
orthogonal polynomials are defined in the space $\RR[\vc x] \setminus \la \|\vc x\|^2-1\ra$ or the space of polynomials
restricted on the sphere, since any polynomial that contains a factor $1-\|\vc x\|^2$ will be zero on $\sph$. Let
$\Pi_n(\sph)$ denote the space of polynomials restricted on the unit sphere and let $\CH_n^d(w)$ denote the 
space of orthogonal polynomials of degree $n$ with respect to $\la \cdot,\cdot\ra$. Then \cite[p. 115]{DX}
\begin{equation}\label{eq:dimHn}
\dim \Pi(\sph) = \dim \CP_n^d +\dim \CP_{n-1}^d \quad\hbox{and}\quad
       \dim \CH_n^d(w) = \dim \CP_n^d - \dim \CP_{n-2}^d.  
\end{equation}
Our general discussion on orthogonal polynomials of several variables then applies to this case. In the case 
$w(\vc x) =1$, the orthogonal polynomials are given by spherical harmonics.


A spherical harmonic $Y$ of degree $n$ is an element of $\CP_n^d$ that satisfies $\Delta P = 0$, where $\Delta$
denotes the Laplace operator. Since $Y$ is homogeneous, its value is completely determined by its restriction on 
the unit sphere, since $Y(r \xi ) = r^n Y(\xi)$ if $\vc x = r \xi$, $r > 0$ and $\xi \in \sph$. Explicit orthogonal basis of 
$\CH_n^d$ can be given in terms of the Gegenbauer polynomials in spherical coordinates. In the case $d=2$,
$\dim \CH_n^2 = 2$ for $n \ge 1$, and a basis of $\CH_n^2$ is the classical 
\begin{equation}\label{eq:2dHn}
  Y_1^n (x,y) = r^n \cos n \t \quad \hbox{and} \quad   Y_2^n (x,y) = r^n \sin n \t 
\end{equation}
in the polar coordinates $(x,y) = (r\cos \t,r\sin\t)$. For higher dimensional, a basis of spherical harmonics 
can be given in spherical polar coordinates \cite{DX}.

For orthogonal polynomial on a solid domain, one particular useful example for us is the classical 
orthogonal polynomials on the unit ball $\BB^d$, which are orthogonal with respect to the weight function
\begin{equation}\label{eq:w-ball}
  \varpi_\mu(\vc x)  = (1-\|\vc x\|^2)^{\mu - \f12}, \quad \mu > -\tfrac12.  
\end{equation}
The space $\CV_n^d(\varpi_\mu)$ contains several orthogonal bases that are classical and can be explicitly given. 
In particular, one basis can be given in terms of the Jacobi polynomials and spherical harmonics. For $\a,\b > -1$, 
the Jacobi polynomial $P_n^{(\a,\b)}$ is orthogonal with respect to the weight function
$$
  w_{\a,\b} (t) = (1-t)^\a (1+t)^\b, \qquad -1 < t < 1
$$
and it is uniquely determined by the relation
$$
   c_{\a,\b}\int_{-1}^1 P_n^{(\a,\b)}(t)P_m^{(\a,\b)}(t) w_{\a,\b} (t) dt = h_n^{(\a,\b)} \delta_{n,m},
$$
where $c_{\a,\b}$ is the normalization constant defined by $c_{\a,\b} \int_{-1}^1 w_{\a,\b}(t) dt =1$, and $h_n^{(\a,\b)}$ is given by
$$
       h_n^{(\a,\b)} = \frac{(\a+1)_n (\b+1)_n(\a+\b+n+1)}{n!(\a+\b+2)_n(\a+\b+n+1)}.
$$
We state two orthogonal bases for $\CV_n^d(\varpi_\mu)$. The first basis is given in terms of the Jacobi polynomials and the spherical harmonics
in spherical polar coordinates. For $ 0 \le m \le n/2$, let $\{Y_\ell^{n-2m}: 1 \le \ell \le \dim \CH_{n-2m}^d\}$ be an 
orthonormal basis of $\CH_{n-2m}^d$. Define \cite[(5.2.4)]{DX}

\begin{equation}\label{eq:basisBd}
  P_{\ell, m}^n (\vc x) = P_n^{(\mu-\f12, n-2j+\f{d-2}{2})} \left(2\|\vc x\|^2-1\right)  Y_{\ell}^{n-2m}(\vc x).
\end{equation}
Then $\{P_{\ell,m}^n: 0 \le m \le n/2, 1 \le \ell \le \dim \CH_{n-2m}^d\}$ are an orthogonal basis of 
$\CV_n^d(\varpi_\mu)$. The second basis is given in terms of the Gegenbauer polynomials $C_n^\l(t)$, which
is a constant multiple of the Jacobi polynomial $P_n^{(\l-\f12,\l-\f12)}$ and it is orthogonal with respect to 
$(1-t^2)^{\l-\f12}$. For $\xb \in \RR^d$, let $\xb_j = (x_1,\ldots, x_j)$ for $1 \le j \le d$. Define
$$
  P_{\kb}^n(\vc x) = \prod_{j=1}^d (1-\|\xb_j\|^2)^{\frac{k_j}{2}} C_{k_j}^{\mu+\k_{j+1}+\cdots + \k_d+\frac{d-j}2} \Big(\frac{x_j}{\sqrt{1-\|\xb_j\|^2}}\Big).
$$
Then the polynomials $\{P_\kb^n: |\kb| =n\}$ are an orthogonal basis of $\CV_n^d(\varpi_\mu)$
(\cite[p. 143]{DX}). 

\section{Quadratic surface of revolution}
\setcounter{equation}{0}

For $d \ge 2$, we define quadratic surfaces of revolution in $\RR^{d+1}$ as follows: 

\begin{defn}\label{def:VV}
Let $S$ be an open set of $\RR$ and let $\phi: S \to \RR_+$ be either one of the following: 
\begin{enumerate}[    (i)]
\item a linear polynomial that is nonnegative on $S$;
\item the square root of a nonnegative polynomial on $S$ of degree at most $2$. 
\end{enumerate}
Given such a $\phi$, we define the quadratic domain of revolution $\VV^{d+1}$ in $\RR^{d+1}$ by 
$$
   \VV^{d+1} = \VV_\phi^{d+1} : = \left \{(\vc x,t) \in \RR^{d+1}: \, \|\vc x\| \le |\phi(t)|, \quad  t \in S, \,\, \vc x \in \RR^d \right\} 
$$
and denote its surface, the quadratic surface of revolution, by 
$$
   \partial \VV^{d+1}= \partial \VV_\phi^{d+1} : = \left \{(\vc x,t) \in \RR^{d+1}: \,  \|\vc x\| = |\phi(t)|, \quad  t \in S, 
       \,\, \vc x \in \RR^d \right\}. 
$$
\end{defn} 

In most of our examples, $S$ is an interval $(a,b)$ but it could also be a union of two intervals in some of
our examples. For each given quadratic surface of revolution, we consider orthogonal polynomials for a 
class of weight functions defined either on or inside of the domain. 

\subsection{Cylinder} 
If $\phi$ is a constant, $\phi (t) = r > 0$ on $(a, b)$, then $\VV^{d+1}$ is a {\it cylinder}  
$$
   \VV^{d+1} =  \{(\vc  x,t): \|\vc x\| \le r, \,\, \vc x \in \RR^d, \,\, a \le t \le b\} = \BB^d_r \times [a,b]
$$
in $\RR^{d+1}$. When $d = 2$, this is a cylinder with $t$-axis as its axis.  

\subsection{Cone} 
If $\phi$ is a linear polynomial $\phi (t) = t$ on $(0, b)$, then $\VV^{d+1}$ is a {\it cone}  
$$
   \VV^{d+1} = \{(\vc x,t): \|\vc x\| \le t, \,\, \vc x \in \RR^d, \,\, 0 \le t \le b\}
$$
in $\RR^{d+1}$. When $d = 2$, this is a cone with $t$-axis as its axis and with its apex at the origin in $\RR^3$. 

\begin{figure}[ht]
  \begin{center}
    \includegraphics[width=0.45\textwidth]{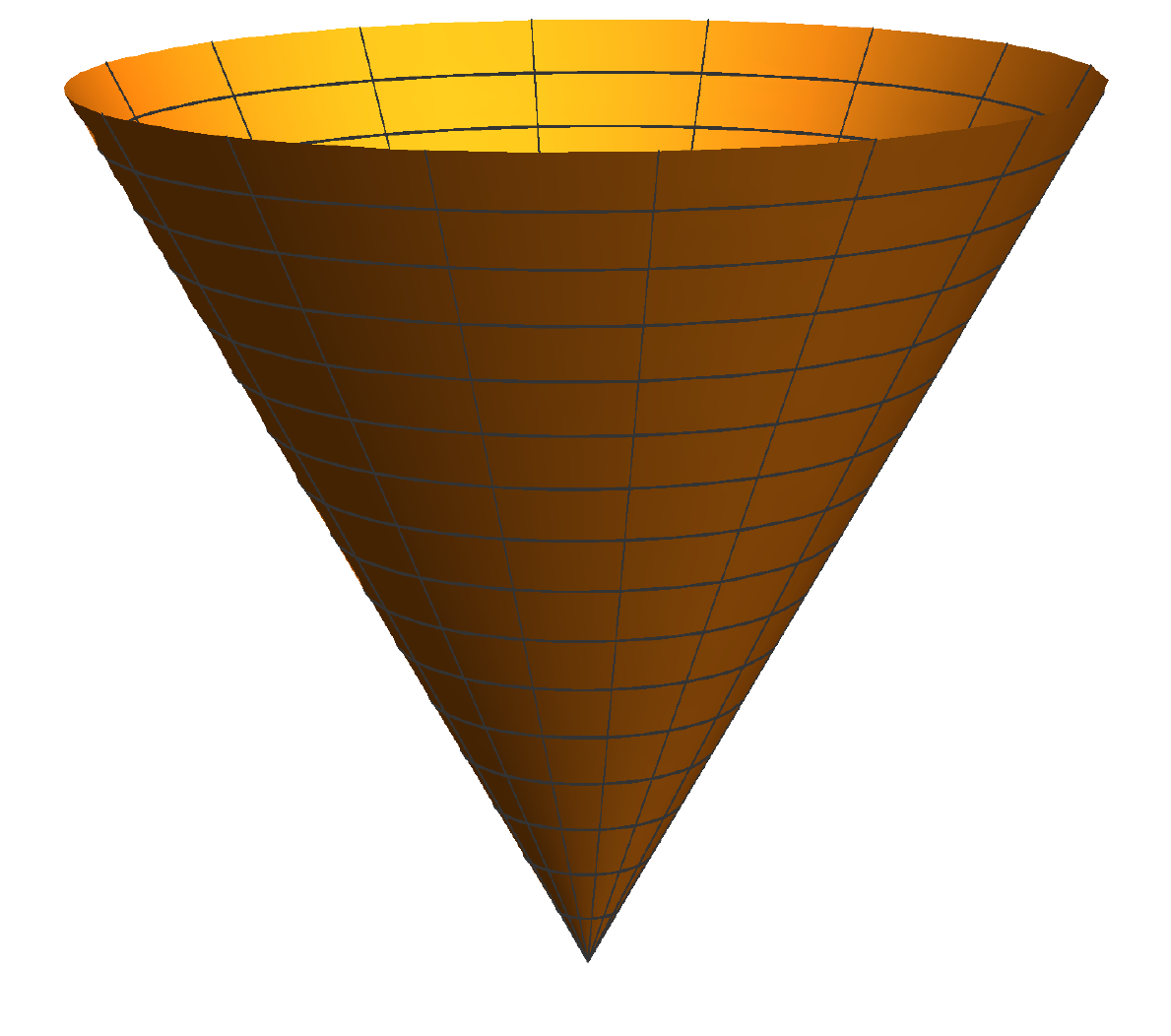} \qquad \quad \includegraphics[width=0.38\textwidth]{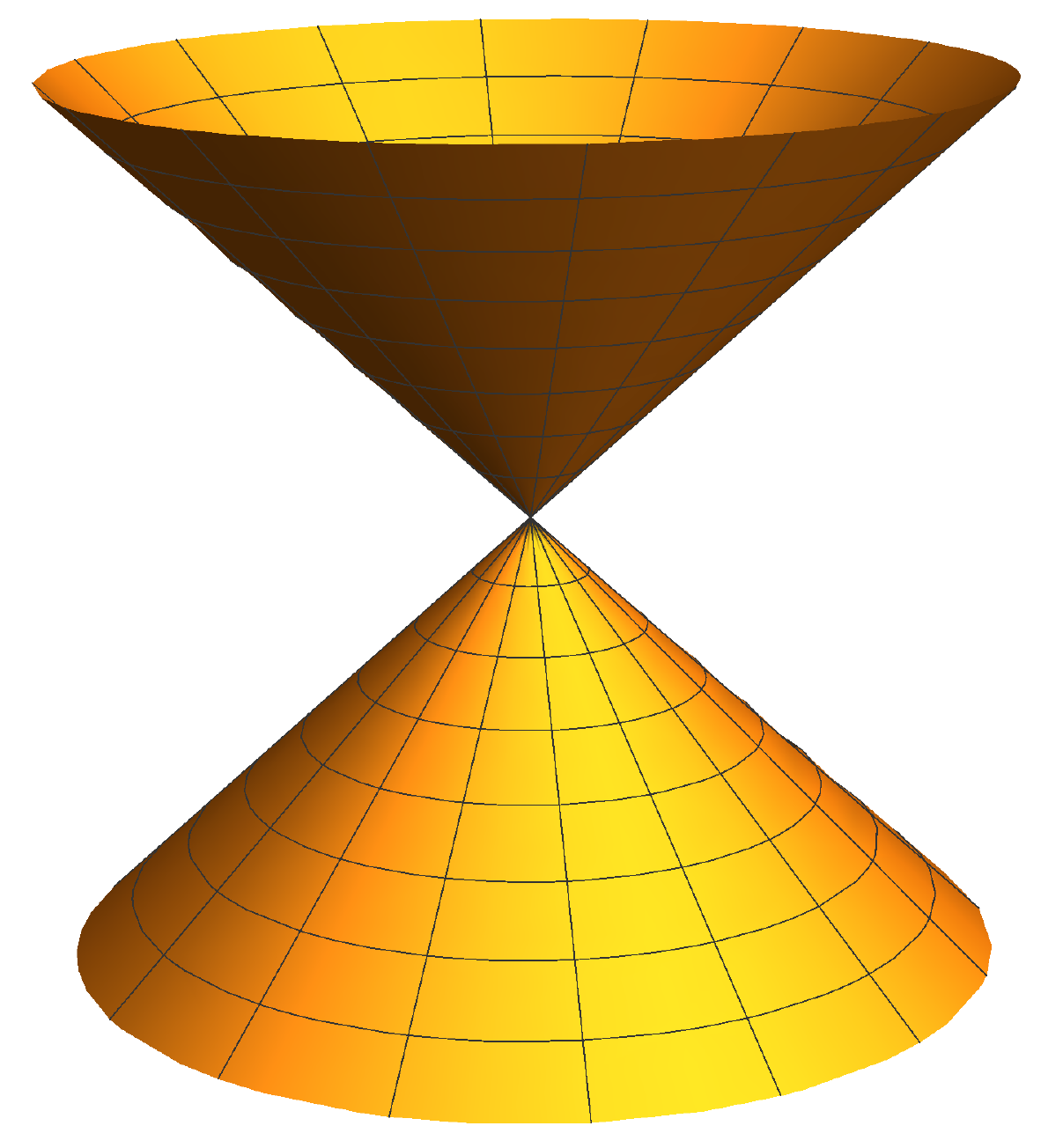} 
  \end{center}
  \caption{Cone and Double Cone}
\end{figure}
 
\subsection{Double cone}
If $\phi (t) = t$ on $(a, b)$ with $a < 0 < b$, then $\VV^{d+1}$ is a {\it double cone}  
$$
   \VV^{d+1} = \{(\vc x,t): \|\vc x\| \le |t|, \,\, \vc x \in \RR^d, \,\, a \le t \le b\}
$$
in $\RR^{d+1}$. When $d = 2$, this is a double cone with $t$-axis as its axis and with its apex at the origin in
$\RR^3$ that extends both above and below $\vc x$-plane.  

\subsection{Ball}
If $\phi$ is given by $\phi(t) = \sqrt{1-t^2}$ for $t \in (0, 1)$, then  $\VV^{d+1}$ is a {\it ball} 
$$
   \VV^{d+1} = \left \{(\vc x,t): \|\vc x\|^2 \le 1-t^2, \,\, 0 \le t \le 1, \,\, \vc x \in \BB^d \right \} = \{(\vc x,t): \|\vc x\|^2+t^2 \le 1\} = \BB^{d+1},
$$
which is the unit ball in $\RR^{d+1}$.  

\subsection{Paraboloid of revolution}
If $\phi$ is given by $\phi(t) = \sqrt{t}$ for $t \in (0, b)$, then  $\VV^{d+1}$ is a {\it paraboloid} 
$$
   \VV^{d+1} = \left \{(\vc x,t): \|\vc x\|^2 \le t, \,\, 0 \le t \le b, \,\, \vc x \in \RR^d \right \}.
$$
When $d = 2$, it is the domain bounded by the rotation of the parabolic curve $y^2 = t$ around $t$-axis 
with $(\vc x,t) = (x,y,t) \in \RR^3$. 
 
\begin{figure}[ht]
  \begin{center}
    \includegraphics[width=0.45\textwidth]{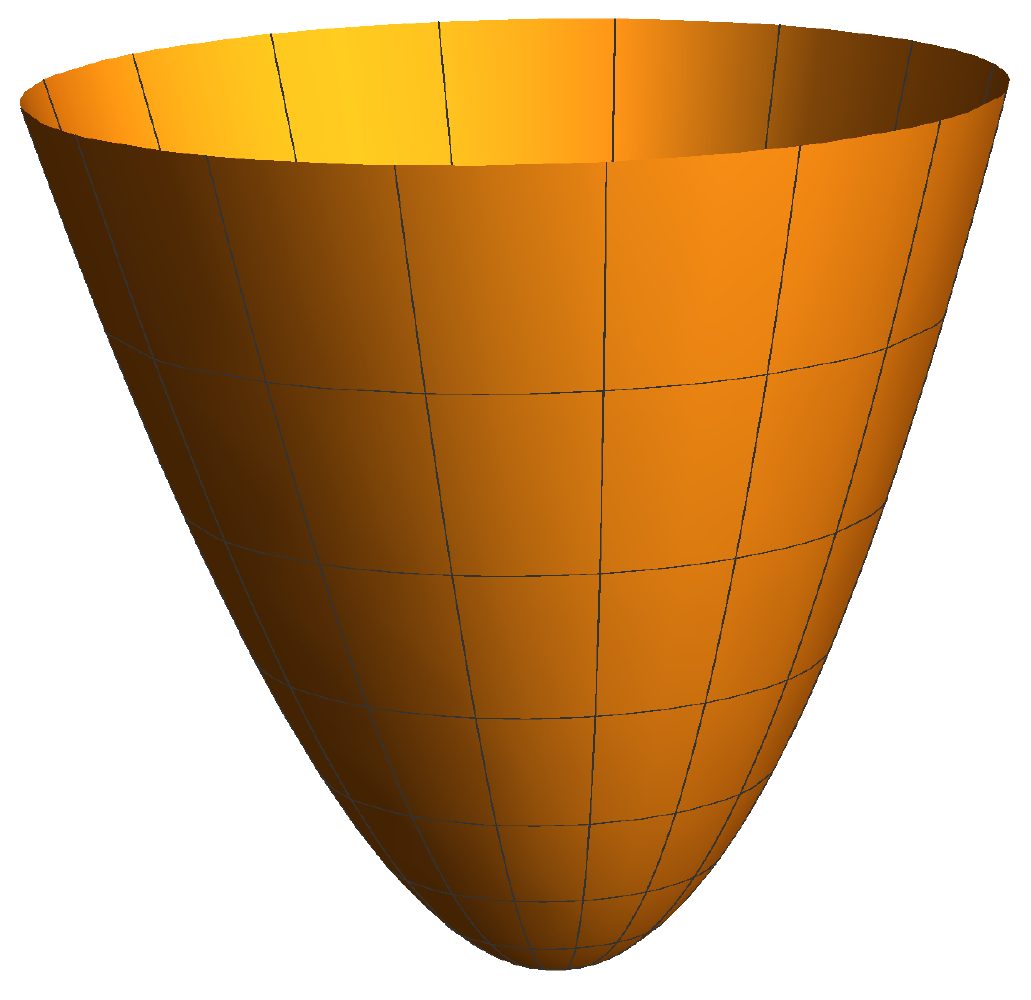} 
  \end{center}
  \caption{Paraboloid and Hyperboloid}
\end{figure}
 
\subsection{Hyperboloid of revolution}
If $\phi$ is given by $\phi(t) = \sqrt{t^2+ \rho^2}$ for $t \in (a, b)$, then  $\VV^{d+1}$ is a {\it hyperboloid} 
$$
   \VV^{d+1} =  \left \{(\vc x,t): \|\vc x\|^2 \le t^2+\rho^2,  \,\, a \le t \le b, \,\, \vc x \in \RR^d\right\}.
$$
When $d = 2$, it is the domain bounded by the rotation of the hyperbolic curve $y^2 - t^2 =\rho^2$ in $\RR^3$
around $t$-axis with $(\vc x,t) = (x,y,t) \in \RR^3$. 
 
\begin{figure}[ht]
  \begin{center}
    \includegraphics[width=0.4\textwidth]{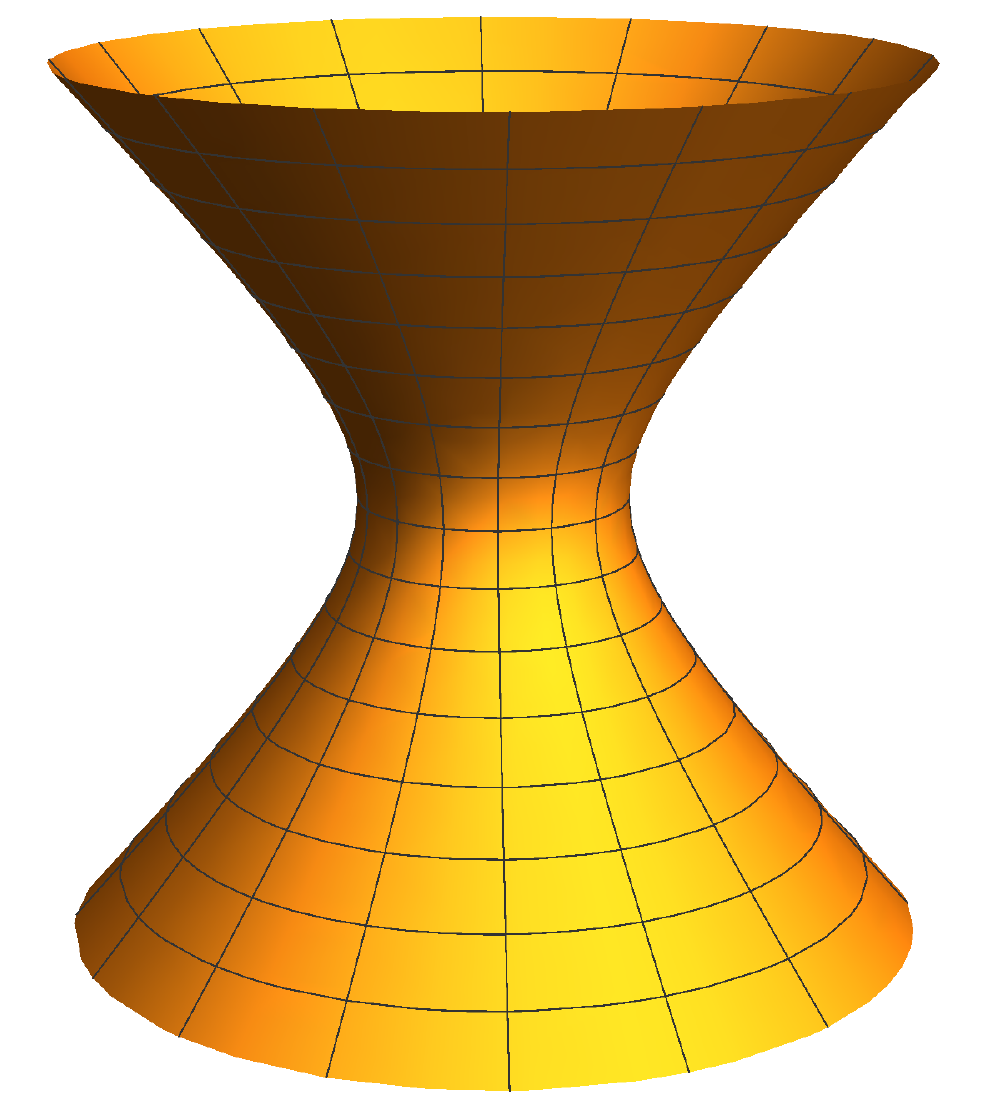} \qquad \quad \includegraphics[width=0.41\textwidth]{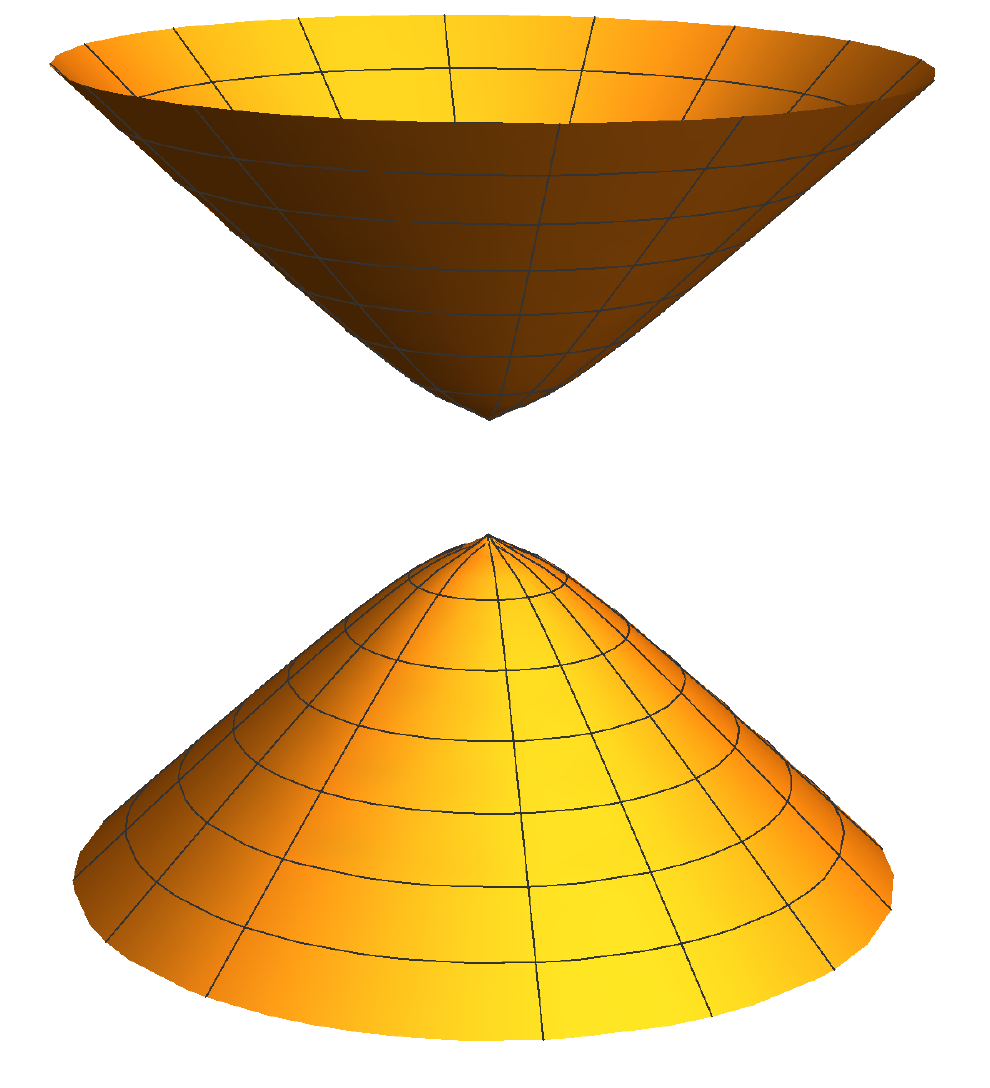} 
  \end{center}
  \caption{Hyperboloid and Hyperboloid of two sheets}
\end{figure}
 
\subsection{Hyperboloid of two sheets}
If $\phi$ is given by $\phi(t) = \sqrt{t^2 - \rho^2}$ for $t \in \{t \in \RR: |t| > \rho\}$, then $\VV^{d+1}$ is a 
{\it hyperboloid of two sheets}, 
$$
   \VV^{d+1} =  \left \{(\vc x,t): \|\vc x\|^2 \le t^2 - \rho^2,  \,\, |t| \ge \rho > 0, \,\, \vc x \in \RR^d\right\}.
$$
When $d = 2$, it is the domain bounded by the rotation of the hyperbolic curve $y^2 - t^2 = - \rho^2$ in $\RR^3$
around $t$-axis with $(\vc x,t) = (x,y,t) \in \RR^3$. 

\section{Orthogonal structure on a quadratic  surface of revolution}
\setcounter{equation}{0}

We consider orthogonal structure with respect to the quadratic surface of revolution. Let $\phi$ 
be a function that satisfies either (1) or (2) of the Definition \ref{def:VV} and let $\VV^{d+1} = \VV^{d+1}_\phi$. 
We define a bilinear form
$$
   \la f,g \ra_w : = c_w \int_{\partial\VV^{d+1}} f(\vc x,t) g(\vc x,t) w(t) \d \sigma(\vc x,t),
$$
where $d\sigma(\vc x,t)$ is the Lebesgue measure on $\partial\VV^{d+1}$ and $w$ is a nonnegative function
defined on the real line such that $\int_{S} |\phi(t)|^{d-1} w(t) \d t < \infty$ and $c_w$ is a normalized constant 
so that $\la 1,1\ra =1$. By choosing the support of $w$, the domain of the integral can be a truncated surface
with $t \in (a,b)$, such as the surface of a finite cone when $\phi(t) = t$ with $a > 0$ or the surface of a 
double cone when $\phi(t) =t$ and $a = -b$. 

Let $\d \s$ denote the surface measure on the unit sphere $\sph$. For $(\vc x,t) \in\partial\VV^{d+1}$, we let 
$\vc x = t\xi$ with $\xi \in \sph$. Then
\begin{align} \label{eq:int-surface}
  \int_{\partial\VV^{d+1}} f(\vc x,t) w(t) d\sigma(\vc x,t) \, & = \int_{\RR} \int_{\|\vc x\| =|\phi(t)|} f(\vc x,t) \d \sigma(\vc x) w(t) \d t \\
      & = \int_{S} |\phi(t)|^{d-1} \int_{\sph} f(\phi(t) \xi, t) d\sigma(\xi) w(t) dt. \notag
\end{align}
This identity implies immediately that the bilinear form $\la \cdot,\cdot\ra$ is an inner product on the polynomial 
space $\RR[\vc x,t]  / \la \|\vc x\|^2 - \phi(t)^2\ra$. 

For $n =0,1,\ldots$, let $\Pi_n(\partial \VV^{d+1})= \Pi_n^{d+1} /  \la \|\vc x\|^2 - \phi(t)^2\ra$ denote the restriction
of polynomials of degree at most $n$ in $d+1$ variables on the boundary of $\VV^{d+1}$. 

\begin{prop} 
For $n =0,1,2,\ldots$, 
\begin{equation} \label{eq:dimSurface}
   \dim \Pi_n(\partial\VV^{d+1}) =  \dim \Pi_n^d+ \dim \Pi_{n-1}^d = \binom{n+d}{d}+\binom{n+d-1}{d}.
\end{equation}
\end{prop}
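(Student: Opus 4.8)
The plan is to compute the dimension algebraically, viewing $\Pi_n(\partial\VV^{d+1})$ as the image of $\Pi_n^{d+1}$ in the quotient ring $\RR[\vc x,t]/\langle g\rangle$, where $g(\vc x,t):=\|\vc x\|^2-\phi(t)^2$. Since this image is $\Pi_n^{d+1}$ modulo the part of the ideal it meets, the first isomorphism theorem gives
$$
  \dim\Pi_n(\partial\VV^{d+1}) = \dim\Pi_n^{d+1} - \dim\bigl(\langle g\rangle\cap\Pi_n^{d+1}\bigr),
$$
so everything reduces to identifying $\langle g\rangle\cap\Pi_n^{d+1}$. First I would record that $\langle g\rangle=g\cdot\RR[\vc x,t]$ is principal, so its elements of degree at most $n$ are exactly the multiples $gq$ with $\deg(gq)\le n$.

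The key step, and the one I expect to require the most care, is to show that this intersection equals $g\cdot\Pi_{n-2}^{d+1}$ exactly, i.e.\ that no cancellation of top-degree terms occurs upon multiplication by $g$. Here I would use that $g$ has total degree exactly $2$ in $(\vc x,t)$: its degree-$2$ homogeneous part is $\|\vc x\|^2-c\,t^2$, where $c\ge 0$ is the coefficient of $t^2$ in $\phi^2$ (so $c=0$ when $\deg\phi^2<2$), and this leading form is nonzero thanks to the $\|\vc x\|^2$ term. Because $\RR[\vc x,t]$ is an integral domain, leading forms multiply and total degree is additive, so $\deg(gq)=2+\deg q$ for every $q\neq 0$; hence $gq\in\Pi_n^{d+1}$ if and only if $q\in\Pi_{n-2}^{d+1}$, which gives $\langle g\rangle\cap\Pi_n^{d+1}=g\cdot\Pi_{n-2}^{d+1}$. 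Multiplication by $g$ is moreover injective, so it is a linear isomorphism $\Pi_{n-2}^{d+1}\to g\cdot\Pi_{n-2}^{d+1}$, whence $\dim\bigl(\langle g\rangle\cap\Pi_n^{d+1}\bigr)=\dim\Pi_{n-2}^{d+1}=\binom{n+d-1}{d+1}$.

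It then remains to assemble the count. The dimension formula \eqref{eq:dimVn}, applied in dimension $d+1$, gives
$$
  \dim\Pi_n(\partial\VV^{d+1}) = \binom{n+d+1}{d+1} - \binom{n+d-1}{d+1},
$$
and two applications of Pascal's rule---$\binom{n+d+1}{d+1}=\binom{n+d}{d}+\binom{n+d}{d+1}$ and then $\binom{n+d}{d+1}=\binom{n+d-1}{d}+\binom{n+d-1}{d+1}$---rewrite the right-hand side as $\binom{n+d}{d}+\binom{n+d-1}{d}=\dim\Pi_n^d+\dim\Pi_{n-1}^d$, which is the assertion.

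As an independent check that also explains the two summands directly, I would observe that a single polynomial is a Gr\"obner basis of the principal ideal it generates, so under a lexicographic order with $x_1$ largest the leading monomial of $g$ is $x_1^2$. The normal-form reduction $x_1^2\mapsto\phi(t)^2-(x_2^2+\cdots+x_d^2)$ never raises total degree, so the standard monomials of degree at most $n$---those with $x_1$-exponent at most $1$---form a basis of the image of $\Pi_n^{d+1}$. Splitting according to whether the $x_1$-exponent is $0$ or $1$ counts the monomials in $(x_2,\dots,x_d,t)$ of degree $\le n$ and of degree $\le n-1$, namely $\dim\Pi_n^d+\dim\Pi_{n-1}^d$.
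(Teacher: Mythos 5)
Your proof is correct, but it takes a genuinely different route from the paper's. The paper argues by explicit reduction in the $t$-variable: writing $a\phi(t)^2 = t^2+bt+c$ with $a\neq 0$, it uses $t^2 = a(\phi(t)^2-\|\vc x\|^2)+a\|\vc x\|^2-bt-c$ and induction on $j$ to show that every $P\in\Pi_n^{d+1}$ is congruent, modulo $\la \|\vc x\|^2-\phi(t)^2\ra$, to a normal form $S(\vc x)+tT(\vc x)$ with $S\in\Pi_n^d$ and $T\in\Pi_{n-1}^d$ --- in effect your Gr\"obner-style reduction, but with leading monomial $t^2$ instead of $x_1^2$. Your main argument (the kernel computation $\la g\ra\cap\Pi_n^{d+1}=g\,\Pi_{n-2}^{d+1}$ via additivity of leading forms in the domain $\RR[\vc x,t]$, followed by Pascal's rule) buys two things over this. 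First, uniformity: the paper's reduction as written requires the $t^2$-coefficient of $\phi(t)^2$ to be nonzero, so it does not literally cover the cylinder ($\phi$ constant) or the paraboloid ($\phi(t)^2=t$), whereas your argument needs only that the coefficient of $\|\vc x\|^2$ in $g$ is nonzero, which holds for every $\phi$ admitted by Definition \ref{def:VV}; your $x_1^2$-reduction in the check at the end inherits the same uniformity. Second, completeness: the paper's displayed decomposition establishes the spanning (upper-bound) direction and leaves implicit why $S(\vc x)+tT(\vc x)\in\la g\ra$ forces $S=T=0$, while your count of $\dim\bigl(\la g\ra\cap\Pi_n^{d+1}\bigr)$ delivers both bounds at once. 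What the paper's route buys instead is the explicit normal form $S(\vc x)+tT(\vc x)$, which mirrors the spherical decomposition behind $\dim\Pi_n(\SS^d)$ and is the structural input for the orthogonal bases $S_{m,\ell}^n$ constructed immediately afterwards; your standard monomials with $x_1$-exponent at most $1$ do not carry that meaning. One harmless slip: the $t^2$-coefficient $c$ of $\phi^2$ need not satisfy $c\ge 0$ (for the ball, $\phi(t)^2=1-t^2$ gives $c=-1$); since you only use that the leading form $\|\vc x\|^2-c\,t^2$ is nonzero, which holds for every $c$, nothing in your argument is affected.
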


\begin{proof}
Let $P$ be a polynomial of degree $n$ in $(\vc x,t)$ variable. We can write 
$$
  P(\vc x,t) = \sum_{j=0}^n P_{n-j}(\vc x) t^j, \qquad P_{n-j} \in \Pi_{n-j}^d.
$$
Since $\phi(t)^2$ is a polynomial of degree $2$ in $t$, say $a \phi(t)^2 = t^2 + b t + c$ with $a \ne 0$,
we can write  
$$
t^2 = a(\phi(t)^2 - \|\vc x\|^2) + a \|\vc x\|^2 -  b t -c,
$$ 
which implies, by induction in $j$, that for $j \ge 2$,
$$
     t^j = a^j (\phi(t)^2 - \|\vc x\|^2) R_j(\vc x,t) + S_j(\vc x) + t T_j(\vc x), 
$$
where $R_j$ is a polynomial of degree $j-2$, $S_j$ and $T_j$ are polynomials of degree $j$ and $j-1$, 
respectively. Consequently, we conclude that 
$$
   P(\vc x,t) = S(\vc x) + t T(\vc x) + (\phi(t)^2 - \|\vc x\|^2) R(\vc x,t),
$$
where $S \in \Pi_n^d$ and $T \in \Pi_{n-1}^d$. Since any $S(\vc x)$ in $\Pi_n^d$ and $t T(\vc x)$ with $T \in \Pi_{n-1}^d$
are elements of $\Pi_n(\VV^{d+1})$, the above displayed formula proves \eqref{eq:dimSurface}.
\end{proof}

\begin{cor} 
Let $\CV_n(\partial\VV^{d+1})$ be the space of orthogonal polynomials of degree $n$ with respect to 
the inner product $\la \cdot,\cdot \ra_w$. Then
$$
   \dim \CV_n(\partial\VV^{d+1}) =  \binom{n+d}{n}+\binom{n+d-1}{n-1} = \dim \Pi_n(\SS^d).
$$
\end{cor}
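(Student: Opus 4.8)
The plan is to read the dimension off the Proposition. The essential point, already recorded just after \eqref{eq:int-surface}, is that $\la\cdot,\cdot\ra_w$ is a genuine (positive definite) inner product on the restricted space $\Pi_n(\partial\VV^{d+1})=\Pi_n^{d+1}/\la\|\vc x\|^2-\phi(t)^2\ra$. Granting this, a Gram--Schmidt procedure applied to any degree--graded basis of $\Pi_n(\partial\VV^{d+1})$ produces a family of orthogonal polynomials of degrees $0,1,\dots,n$ that is a basis of the whole restricted space; hence the space $\CV_n(\partial\VV^{d+1})$ spanned by these orthogonal polynomials satisfies
$$
  \dim \CV_n(\partial\VV^{d+1}) = \dim \Pi_n(\partial\VV^{d+1}).
$$
By the Proposition the right-hand side equals $\binom{n+d}{d}+\binom{n+d-1}{d}$, and the substitutions $\binom{n+d}{d}=\binom{n+d}{n}$ and $\binom{n+d-1}{d}=\binom{n+d-1}{n-1}$ give the first expression in the claim.

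For the final equality I would note that the count in the Proposition is insensitive to the choice of $\phi$: it holds verbatim for every quadratic surface of revolution. In particular the sphere $\SS^d$ is itself such a surface---the Ball case $\phi(t)=\sqrt{1-t^2}$, for which $\partial\VV^{d+1}=\SS^d$---so the same formula yields $\dim \Pi_n(\SS^d)=\binom{n+d}{n}+\binom{n+d-1}{n-1}$, and the two sides agree. Equivalently one may quote \eqref{eq:dimHn} with $d$ replaced by $d+1$, giving $\dim \Pi_n(\SS^d)=\dim\CP_n^{d+1}+\dim\CP_{n-1}^{d+1}=\binom{n+d}{n}+\binom{n+d-1}{n-1}$.

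The one step that carries genuine content, and the one I would treat most carefully, is the non-degeneracy of $\la\cdot,\cdot\ra_w$ on the quotient. Using \eqref{eq:int-surface}, the squared norm of the class of $f$ is
$$
  \la f,f\ra_w = c_w\int_{S}|\phi(t)|^{d-1}\int_{\sph} f(\phi(t)\xi,t)^2\,\d\s(\xi)\,w(t)\,\d t,
$$
which is nonnegative and vanishes only if $f(\phi(t)\xi,t)=0$ for $w$-almost every $t$ and $\s$-almost every $\xi$. Since $w$ is nondegenerate and $\int_S|\phi(t)|^{d-1}w(t)\,\d t>0$, this forces $f$ to vanish on a subset of $\partial\VV^{d+1}$ of positive surface measure, hence on the whole (irreducible) quadric surface; the normal form $f=S(\vc x)+tT(\vc x)+(\phi(t)^2-\|\vc x\|^2)R(\vc x,t)$ from the proof of the Proposition then shows the class of $f$ is zero. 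Once non-degeneracy is secured, the orthogonal polynomials indeed exhaust $\Pi_n(\partial\VV^{d+1})$ and the dimension identity follows as above.
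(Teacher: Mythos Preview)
Your argument rests on a misreading of $\CV_n(\partial\VV^{d+1})$. As introduced in the corollary (and consistently with the use of $\CV_n^d$ in Section~2), this is the space of orthogonal polynomials of degree \emph{exactly} $n$: polynomials in $\Pi_n(\partial\VV^{d+1})$ that are orthogonal to all of $\Pi_{n-1}(\partial\VV^{d+1})$. It is not the span of all orthogonal polynomials of degrees $0,\dots,n$; that span is simply $\Pi_n(\partial\VV^{d+1})$ itself, so your identity $\dim\CV_n=\dim\Pi_n$ becomes a tautology under your reading and does not compute what is intended. The paper's proof makes the correct identification: Gram--Schmidt gives
$$
\dim\CV_n(\partial\VV^{d+1})=\dim\Pi_n(\partial\VV^{d+1})-\dim\Pi_{n-1}(\partial\VV^{d+1}),
$$
and the normal form $P(\vc x,t)=S(\vc x)+tT(\vc x)+(\phi(t)^2-\|\vc x\|^2)R(\vc x,t)$ from the proof of the Proposition shows this increment equals $\dim\CP_n^d+\dim\CP_{n-1}^d$.

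It is worth noting that the displayed binomials in the corollary, $\binom{n+d}{n}+\binom{n+d-1}{n-1}$, do coincide numerically with $\dim\Pi_n(\partial\VV^{d+1})$ from the Proposition, so your answer happens to match the statement as printed. But this appears to be a misprint in the paper: the paper's own proof, and the explicit basis $\{S_{m,\ell}^n:0\le m\le n,\ 1\le\ell\le\dim\CH_m^d\}$ constructed in the subsequent theorem (which has $\sum_{m=0}^n\dim\CH_m^d$ elements), both give
$$
\dim\CV_n(\partial\VV^{d+1})=\dim\CP_n^d+\dim\CP_{n-1}^d=\binom{n+d-1}{n}+\binom{n+d-2}{n-1}=\dim\Pi_n(\SS^{d-1}),
$$
a shift of one in $d$ relative to the displayed formula. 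Your non-degeneracy discussion is correct and well placed, but the main computation must target the degree-$n$ increment rather than the cumulative space.
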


\begin{proof}
Using the Gram--Schmidt process, the dimension of $\CV_n(\partial\VV^{d+1})$ is equal to the dimension of
$\Pi_n(\partial \VV^{d+1})\setminus \Pi_{n-1}(\partial \VV^{d+1})$, which is equal to $\dim \CP_n^d + 
\dim\CP_{n-1}^d$ by the proof of the proposition. 
\end{proof}

We give an orthogonal basis of $\CV_n(\partial\VV^{d+1})$. For a fixed $m \in \NN_0$, let $q_n^{(m)}$ 
be the orthogonal polynomial of degree $n$ with respect to the weight function $\phi(t)^{2 m + d-1} w(t)$ on 
$S$. Let $\{Y_\ell^m: 1 \le \ell \le \dim \CH_n^d\}$ denote an orthonormal basis of $\CH_m^d$. We define
\begin{equation} \label{eq:sfOPbasis}
  S_{m, \ell}^n (x,y) = q_{n-m}^{(m)} (t) \phi(t)^m Y_\ell^m \left(\frac{x}{\phi(t)}\right), 
      \quad 0 \le m \le n, \,\, 1 \le \ell \le \dim \CH_m^d.
\end{equation}

\begin{thm} 
The polynomial $S_{m,\ell}^n$ is an element of $\Pi_n(\partial \VV^{d+1})$. Furthermore, 
the set $\{S_{m,\ell}^n: 0 \le m \le n, 1 \le \ell \le \dim \CH_n^{d}\}$ is an orthogonal basis of 
$\CV_n(\partial\VV^{d+1})$ and
$$
\la S_{m, \ell}^n, S_{m', \ell'}^{n'} \ra_w = h_m^n  \delta_{n,n'} \delta_{m,m'} \delta_{\ell,\ell'},
$$
where 
$$
   h_m^n = c_w \omega_{d} \int_S [q_{n-m}^{(m)}]^2 |\phi(t)|^{2m + d-1} w(t) dt > 0.
$$
\end{thm}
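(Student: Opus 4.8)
The plan is to exploit the homogeneity of the spherical harmonics $Y_\ell^m$, which is exactly what turns the separated expression \eqref{eq:sfOPbasis} into a genuine polynomial and, at the same time, decouples the surface integral into an angular factor over $\sph$ and an axial factor over $S$. The key identity to record first is that, since $Y_\ell^m \in \CP_m^d$ is homogeneous of degree $m$, one has $\phi(t)^m Y_\ell^m(\vc x/\phi(t)) = Y_\ell^m(\vc x)$ wherever $\phi(t)\neq 0$, so that, as a polynomial,
$$
  S_{m,\ell}^n(\vc x,t) = q_{n-m}^{(m)}(t)\, Y_\ell^m(\vc x).
$$
Because $q_{n-m}^{(m)}$ has degree $n-m$ in $t$ and $Y_\ell^m$ has degree $m$ in $\vc x$, this has total degree $n$; hence $S_{m,\ell}^n\in\Pi_n(\partial\VV^{d+1})$, which is the first assertion.

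For the orthogonality relation I would restrict to the surface by writing $\vc x=\phi(t)\xi$ with $\xi\in\sph$, so that $Y_\ell^m(\vc x)=\phi(t)^m Y_\ell^m(\xi)$, and then apply the reduction formula \eqref{eq:int-surface} together with Fubini to obtain
$$
  \la S_{m,\ell}^n,S_{m',\ell'}^{n'}\ra_w
   = c_w\int_S q_{n-m}^{(m)}(t)\,q_{n'-m'}^{(m')}(t)\,|\phi(t)|^{m+m'+d-1}\,w(t)
      \Big(\int_{\sph} Y_\ell^m(\xi)Y_{\ell'}^{m'}(\xi)\,\d\s(\xi)\Big)\d t.
$$
Here the inner integral equals $\o_d\,\delta_{m,m'}\delta_{\ell,\ell'}$, because spherical harmonics of distinct degrees are orthogonal and $\{Y_\ell^m\}$ is orthonormal on $\sph$. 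This collapses the $\xi$-integral, forces $m=m'$, reduces the exponent of $|\phi|$ to $2m+d-1$, and leaves $c_w\o_d\int_S q_{n-m}^{(m)}(t)\,q_{n'-m}^{(m)}(t)\,|\phi(t)|^{2m+d-1}w(t)\,\d t$; by the orthogonality of $q_{n-m}^{(m)}$ and $q_{n'-m}^{(m)}$ on $S$ with respect to the weight $\phi(t)^{2m+d-1}w(t)$, this equals $h_m^n\,\delta_{n,n'}$ with $h_m^n$ as stated and manifestly positive.

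For the basis statement I would combine mutual orthogonality with a dimension count. The relation just established holds for arbitrary degrees, so the entire family $\{S_{m,\ell}^k\}$ across all $k$ is mutually orthogonal, hence linearly independent. In degree $n$ the number of admissible pairs $(m,\ell)$ is $\sum_{m=0}^n\dim\CH_m^d$, which telescopes, via $\dim\CH_m^d=\dim\CP_m^d-\dim\CP_{m-2}^d$ from \eqref{eq:dimHn}, to $\dim\CP_n^d+\dim\CP_{n-1}^d=\dim\CV_n(\partial\VV^{d+1})$. Summing over $k\le n$ yields $\dim\Pi_n^d+\dim\Pi_{n-1}^d=\dim\Pi_n(\partial\VV^{d+1})$ by \eqref{eq:dimSurface}, so the orthogonal set $\{S_{m,\ell}^k:0\le k\le n\}$ is in fact a basis of $\Pi_n(\partial\VV^{d+1})$. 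In particular each $S_{m,\ell}^n$ is orthogonal to the span of the lower-degree members, which is all of $\Pi_{n-1}(\partial\VV^{d+1})$, so it lies in $\CV_n(\partial\VV^{d+1})$, and these elements exhaust its dimension.

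The step requiring the most care is this last completeness argument. Pairwise orthogonality among the $S_{m,\ell}^n$ does not by itself place them in $\CV_n(\partial\VV^{d+1})$, since membership requires orthogonality to \emph{all} of $\Pi_{n-1}(\partial\VV^{d+1})$; it is the telescoping dimension count, matched against $\dim\Pi_n(\partial\VV^{d+1})$, that certifies the family spans the correct space. By contrast, once the homogeneity reduction $\phi(t)^m Y_\ell^m(\vc x/\phi(t))=Y_\ell^m(\vc x)$ is in hand, the orthogonality computation itself is entirely routine.
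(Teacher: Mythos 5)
Your proposal is correct and follows essentially the same route as the paper: homogeneity of $Y_\ell^m$ turns $S_{m,\ell}^n$ into the polynomial $q_{n-m}^{(m)}(t)Y_\ell^m(\vc x)$ of degree $n$, and the parametrization \eqref{eq:int-surface} factors the inner product into the one-variable orthogonality of the $q_{n-m}^{(m)}$ against $|\phi(t)|^{2m+d-1}w(t)$ and the orthonormality of the spherical harmonics. The only difference is that you make explicit the telescoping dimension count certifying that the lower-degree members span $\Pi_{n-1}(\partial\VV^{d+1})$ and hence that the degree-$n$ family lies in, and exhausts, $\CV_n(\partial\VV^{d+1})$ --- a completeness step the paper asserts without detail, relying implicitly on its preceding Corollary.
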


\begin{proof}
Since $Y_\ell^m$ are homogeneous polynomials, it is easy to see that $S_{m,\ell}^n$ is a polynomial of
degree $n$ in $(\vc x,t)$. Using \eqref{eq:int-surface}, we obtain
\begin{align*}
 \la S_{m, \ell}^n, S_{m', \ell'}^{n'} \ra_w &\, = c_w \int_{\partial\VV^{d+1}}
      S_{m, \ell}^n (\vc x,t)S_{m', \ell'}^{n'} (x,y) w(t) \d \sigma(\vc x,t) \\
   \, & = c_w   \int_S q_{n-m}^{(m)}(t) q_{n'-m'}^{(m')}(t) |\phi(t)|^{2m +d-1} w(t) \d t
         \int_{\sph} Y_\ell^m(\xi)Y_{\ell'}^{m'}(\xi) \d \s(\xi) \\
   \, &= h_m^n  \delta_{n,n'} \delta_{m,m'} \delta_{\ell,\ell'}.  
\end{align*}
Thus, $\{S_{m,\ell}^n\}$ is an orthogonal set and they form a basis for $\CV_n(\partial\VV^{d+1})$. 
\end{proof}

Below we give several examples for specific conic surface of revolution. We shall always assume that
$\{Y_\ell^m: 1 \le \ell \le \dim \CH_n^d\}$ denotes an orthonormal basis of $\CH_m^d$. 

\subsection{Surface of a cone} We choose $\phi(t) = t$. When $w = u_{\a,\b}$ is the Jacobi weight function 
supported on $(0,1)$,  
\begin{equation} \label{eq:Jacobi01}
  u_{\a,\b}(t) = t^\a (1-t)^\b \chi_{(0,1)}(t) \quad \a > -d, \quad \b > -1,
\end{equation}
the orthogonal polynomials are given in terms of the Jacobi polynomials by 
$$
   S_{m,\ell}^n(\vc x,t) = t^m P_{n-m}^{(\a+2m +d-1,\b)}(1-2t) Y_\ell^m(\vc x), \qquad 0\le m \le n, \quad 1 \le \ell \le \dim \CH_n^d,  
$$
which are orthogonal on the surface of the finite cone 
$$
\partial\VV^{d+1}: =\{(\vc x,t): \vc x \in \RR^d, 0 \le t \le 1, \, \|\vc x\| =t\}.
$$ 
In particular, for $d =2$, the domain is the surface in $\RR^3$ and the orthogonal basis is given 
in terms of the coordinates $(x,y,t) = (t\cos \t, t \sin \t, t)$ by 
\begin{align}  \label{eq:SurfaceConeDef}
\begin{split}
    S_{m,1}^n(x,y,t) &\, = P_{n-m}^{(\a+2m +1,\b)}(1-2t) t^m \cos m \t, \\
    S_{m,2}^n(x,y,t)&\, = P_{n-m}^{(\a+2m +1,\b)}(1-2t) t^m \sin m \t.  
\end{split}
 \end{align}
When $\a = -1$, these polynomials are also eigenfunctions of a second order differential operators \cite{X}. 

Another example is when $w(t) = (1-t^2)^{\mu-\f12}$ on the interval $(-1,1)$, then the orthogonal polynomials
are given by 
$$
  S_{m,\ell}^n(\vc x,t) = C_{n-m}^{(\mu, m + \f{d-1}{2})}(t) Y_\ell^m(\vc x), \qquad 0\le m \le n, \quad 1 \le \ell \le \dim \CH_n^d,
$$
where $C_n^{(\mu,\nu)}(t)$ is the generalized Gegenbauer polynomial of degree $n$, which is orthogonal with
respect to the weight function
\begin{equation}\label{eq:GGweight}
  w_{\mu,\nu}(t) = |t|^{2\nu} (1-t^2)^{\mu-\f12}, \qquad t \in (-1,1),
\end{equation}
and these polynomials are orthogonal on the surface of the double cone 
$$
\partial\VV^{d+1}: =\{(\vc x,t): \vc x \in \RR^d,  -1 \le t \le 1, \, \|\vc x\| =|t| \}.
$$ 

\subsection{Paraboloid of revolution} 
We choose $\phi(t) = \sqrt{t}$ for $t \in (0, b)$. When $w = u_{\a,\b}$, the
orthogonal polynomials are given in terms of the Jacobi polynomials by 
$$
   S_{m,\ell}^n(\vc x,t) = P_{n-m}^{(\a+m +\frac{d-1}{2},\b)}(1-2t) t^{\f{m}{2}}Y_\ell^m\left(\f{\vc x}{\sqrt{t}}\right), 
         \quad 0\le m \le n, \quad 1 \le \ell \le \dim \CH_n^d,
$$
which are orthogonal on the surface of the paraboloid of revolution
$$
\partial\VV^{d+1}: =\left \{(\vc x,t): \|\vc x\|^2 = t, \,\, 0 \le t \le 1, \,\, \vc x \in \RR^d \right \}.
$$ 
 
\subsection{Hyperboloid of revolution}
We choose $\phi(t) = \sqrt{t^2+ \rho^2}$ for a fixed $\rho \ne 0$.  If we choose $w(t) = u_{\a,\b}$, then 
$q_n^{(m)}$ needs to be orthogonal with respect to the weight function 
$$
    (t^2+ \rho^2)^{m + \frac{d-1}{2}} t^\a (1-t)^\b.
$$
The orthogonal polynomials for this weight function are well-defined but cannot be written explicitly in
terms of classical orthogonal polynomials of one variable.   

\medskip

\section{Orthogonal structure in the domain of revolution}
\setcounter{equation}{0}

We consider orthogonal structure on the quadratic domain bounded by the conic surface of revolution. 
Let $\phi$ be a function that satisfies either (1) or (2) of the Definition \ref{def:VV} and let $\VV^{d+1} 
= \VV^{d+1}_\phi$. 

Let $w_1$ be a weight function defined on $\RR$ and let $w_2$ be a central-symmetric weight function 
defined on the unit ball $\BB^d =\{\vc x: \|\vc x\| \le 1\}$ of $\RR^d$. For each quadratic domain of revolution, 
$\VV^{d+1}_\phi$, we define 
\begin{equation} \label{eq:weightVV}
   W (\vc x,t) : = w_1(t) w_2 \left(\frac{\vc x}{\phi(t)} \right), \qquad  (\vc x,t) \in \VV^{d+1},
\end{equation}
and its associated inner product 
$$
     \la f,g\ra_W: = b_W \int_{\VV^{d+1}} f(\vc x,t) g(\vc x,t)  W(\vc x,t) \d \vc x \d t, 
$$
where $b_W$ is a normalization constant such that $\la 1,1\ra_W = 1$. By choosing the support of $w_1$, 
the domain of the integral can be the the conic solid of revolution with $t \in (a, b)$. For example, it could be
a truncated cone when $\phi(t) = t$ with $0 \le t \le b$.   

When $d =1$, such weight functions and associated orthogonal polynomials of two variables were first 
considered by Koornwinder \cite{K}; see \cite[\S2.6.1]{DX}. It includes the weight function
$(1-x^2-y^2)^{\mu -\f12}$ on the unit disk $\BB^2$ and the weight function
$x^\a y^\b(1-x-y)^\g$ on the triangle $\TT^2 = \{(x,y): x \ge 0, y \ge 0, x+y \le 1\}$. Furthermore,
by extending the domain to 
$$
   \{(x,y): a < x <y, \quad - \phi(x) < y < \phi(x)\}
$$
it also include the weight function $(1-x)^\b (x-y^2)^\a$ on a parabolic biangle 
$R=\{(x,y): y^2<x<1\}$ bounded by a straight line and a parabola. 

For $n \in \NN_0$, let $\CV_n(\VV^{d+1})$ be the space of orthogonal polynomials of degree $n$ 
with respect to this inner product. Then
$$
\dim \CV_n(\VV^{d+1}) = \binom{n+d+1}{n}.
$$ 
We give an orthogonal basis for $\CV_n(\VV^{d+1})$. Let $q_n^{(m)}$ denote an orthogonal polynomial of 
degree $n$ with respect to the weight function $|\phi(t)|^{2m+ d} w_1(t)$ on $S \subset \RR$ and let 
$\{P_\kb^n: |\kb| = n, \kb \in \NN_0^d\}$ denote an orthonormal basis for $\CV_n^d(w_2)$ on $\BB^d$. 
We define 
\begin{equation} \label{eq:OPbasis}
  Q_{m,\kb}^n(\vc x,t) = q_{n-m}^{(2m+d)}(t) \phi(t)^m P_\kb^m\left(\frac{\vc x}{\phi(t)}\right).
\end{equation}

\begin{thm}
The polynomial $Q_{m,\kb}^n$ is an element of $\CV_n(\VV^{d+1})$. Furthermore, the set 
$\{Q_{m,\kb}^n: |\kb| = m \le n, \kb \in \NN_0^d, m =0,1,\ldots, n\}$ is an orthogonal basis 
of $\CV_n(\VV^{d+1})$, 
$$
  \la Q_{m,\kb}^n, Q_{m',\kb'}^{n'} \ra_W  = h_{m}^n \delta_{n,n'} \delta_{m,m'}\delta_{\kb,\kb'}
$$
where, assuming $w_2$ has unit integral on $\BB^d$, 
$$
  h_m^n  = b_W   \int_{S}  [q_{n-m}^{(2m+d)}(t)]^2 [\phi(t)]^{2m +d} w_1(t)\d t.
$$
\end{thm}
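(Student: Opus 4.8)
The plan is to mirror the proof of the surface case, reducing everything to the one-variable orthogonality of the $q_{n-m}^{(2m+d)}$ and the orthonormality of the $P_\kb^m$ on $\BB^d$ by means of the anisotropic scaling $\vc x = \phi(t)\vc y$. For fixed $t \in S$ this maps the slice $\{\|\vc x\| \le |\phi(t)|\}$ onto $\BB^d$, has Jacobian $|\phi(t)|^d$, and turns the weight into $W(\phi(t)\vc y, t) = w_1(t) w_2(\vc y)$, so that
$$
  \int_{\VV^{d+1}} f(\vc x,t) W(\vc x,t)\, \d\vc x\, \d t = \int_S |\phi(t)|^d w_1(t) \int_{\BB^d} f(\phi(t)\vc y, t) w_2(\vc y)\, \d \vc y\, \d t.
$$
Under this substitution $Q_{m,\kb}^n(\phi(t)\vc y, t) = q_{n-m}^{(2m+d)}(t)\,\phi(t)^m P_\kb^m(\vc y)$, which fully separates the $t$ and $\vc y$ dependence; this separation is what makes the computation go through. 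Three things must then be checked: that $Q_{m,\kb}^n$ is a genuine polynomial of degree $n$ in $(\vc x,t)$, that the collection is orthogonal with the stated norms, and that it has the right cardinality to be a basis.

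The first point is where the hypothesis that $w_2$ is central symmetric is essential, and I expect it to be the only genuinely delicate step. Because $w_2(-\vc y) = w_2(\vc y)$, the reflection $\vc y \mapsto -\vc y$ preserves the inner product on $\BB^d$ and maps $\CV_m^d(w_2)$ to itself; a standard parity argument then shows that every element of $\CV_m^d(w_2)$, in particular each $P_\kb^m$, has parity $(-1)^m$, i.e.\ only monomials $\vc y^\a$ with $|\a| \equiv m \pmod 2$ occur. Writing $P_\kb^m(\vc y) = \sum_\a c_\a \vc y^\a$ we then obtain
$$
  \phi(t)^m P_\kb^m\Big(\tfrac{\vc x}{\phi(t)}\Big) = \sum_\a c_\a\, \phi(t)^{m-|\a|}\vc x^\a = \sum_\a c_\a\, (\phi(t)^2)^{(m-|\a|)/2}\,\vc x^\a,
$$
where every exponent $m - |\a|$ is even, so that each summand is a polynomial in $(\vc x,t)$ since $\phi(t)^2$ is a polynomial of degree at most $2$. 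Multiplying by $q_{n-m}^{(2m+d)}(t)$, whose degree in $t$ is $n-m$, shows $Q_{m,\kb}^n \in \Pi_n^{d+1}$, with genuine degree $n$ contributed by the $|\a|=m$ terms.

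For orthogonality I would substitute the separated form above into the integration formula, whereupon the integral factors as
$$
  \la Q_{m,\kb}^n, Q_{m',\kb'}^{n'}\ra_W = b_W \Big(\int_S q_{n-m}^{(2m+d)}(t) q_{n'-m'}^{(2m'+d)}(t) |\phi(t)|^{m+m'+d} w_1(t)\, \d t\Big)\Big(\int_{\BB^d} P_\kb^m(\vc y) P_{\kb'}^{m'}(\vc y) w_2(\vc y)\, \d\vc y\Big).
$$
The ball factor equals $\delta_{m,m'}\delta_{\kb,\kb'}$, since orthogonal polynomials of distinct degrees on $\BB^d$ are orthogonal and $\{P_\kb^m\}$ is orthonormal. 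Setting $m = m'$, the $t$-factor becomes $\int_S q_{n-m}^{(2m+d)} q_{n'-m}^{(2m+d)} |\phi(t)|^{2m+d} w_1(t)\, \d t$, which by the defining orthogonality of the $q_\bullet^{(2m+d)}$ with respect to $|\phi(t)|^{2m+d} w_1(t)$ vanishes unless $n = n'$ and otherwise equals $h_m^n/b_W$. This yields $\la Q_{m,\kb}^n, Q_{m',\kb'}^{n'}\ra_W = h_m^n \delta_{n,n'}\delta_{m,m'}\delta_{\kb,\kb'}$ with $h_m^n > 0$.

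Finally, orthogonality makes the whole family linearly independent. A hockey-stick count gives $\#\{(m,\kb): |\kb| = m,\, 0\le m\le n\} = \sum_{m=0}^n \dim\CV_m^d(w_2) = \binom{n+d}{d} = \dim\CV_n(\VV^{d+1})$, and summing over degrees $\le n$ matches $\dim\Pi_n^{d+1}$. Hence the $Q$'s of degree $<n$ span $\Pi_{n-1}^{d+1}$, so each degree-$n$ polynomial $Q_{m,\kb}^n$ is orthogonal to $\Pi_{n-1}^{d+1}$ and therefore lies in $\CV_n(\VV^{d+1})$; being $\dim\CV_n(\VV^{d+1})$ mutually orthogonal elements, they form the desired orthogonal basis. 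The only real obstacle is the polynomiality step resting on the parity of $P_\kb^m$; the rest is a direct transcription of the change of variables together with the one-dimensional and ball orthogonalities.
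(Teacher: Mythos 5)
Your proposal is correct and follows essentially the same route as the paper's proof: the same change of variables $\vc x = \phi(t)\vc u$ that factors the integral over $\VV^{d+1}$ into an integral over $S$ times an integral over $\BB^d$, the same parity argument from the central symmetry of $w_2$ to establish that $Q_{m,\kb}^n$ is a genuine polynomial of degree $n$, and the same factorization into the one-variable orthogonality of $q_{n-m}^{(2m+d)}$ and the orthonormality of $P_\kb^m$. You merely flesh out two points the paper leaves implicit --- the justification of the parity property and the spanning/dimension-count argument --- and your count $\binom{n+d}{d}$ is in fact the correct value of $\dim \CV_n(\VV^{d+1})$ (the paper's displayed $\binom{n+d+1}{n}$ is a typo).
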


\begin{proof}
If $\phi$ is a linear polynomial, it is evident that $\phi(t)^m P_\kb^m\left(\frac{\vc x}{\phi(t)}\right)$ is a polynomial
of degree $m$ in $(\vc x,t)$. Since $w_2$ is centrally symmetric, the polynomial $P_\kb^m(\vc x) = 
\sum_{|\b| \le m} a_{\b,\kb} \vc x^\b$ is a sum of even monomials if $m$ is even and a sum of odd monomials 
if $m$ is odd, so that we can rewrite it as 
$$
\phi(t)^m P_\kb^m\left(\frac{\vc x}{\phi(t)}\right) = \sum_{0 \le k \le m/2} \phi(t)^{2k} \sum_{|\b| = m-2 k} b_{\b,\kb} \vc x^\b. 
$$
If $\phi$ is the square root of a quadratic polynomial, then the above expression shows that
$\phi(t)^m P_\kb^m\left(\frac{\vc x}{\phi(t)}\right)$ is a polynomial of degree $m$. Hence, $Q_{m,\kb}^n$ is a 
polynomial of degree $n$ for each $\a$ and $0 \le m \le n$. For $(\vc x,t) \in \VV^{d+1}$, we make a change of variable 
$\vc x  \to \vc u \in \BB^d$ by $\vc x = \phi(t) \vc u$, so that $\d \vc x = |\phi(t)|^d \d \vc u$ and 
\begin{align}\label{eq:Int-VV}
  \int_{\VV^{d+1}} f(\vc x,t) W(\vc x,t) d\vc x dt \, & = \int_S \int_{\|\vc x\| \le |\phi(t)|} f(\vc x,t) w_1(t) w_2 \left(\frac{\vc x}{\phi(t)}\right)\d \vc x\, \d t \\
      & = \int_S |\phi(t)|^{d} \int_{\BB^d} f(\phi(t) \vc u, t) w_1(t) w_2(\vc u) \d \vc u \, \d t. \notag
\end{align}
The above parametrization gives, since $P_\a^n$ is orthonormal, 
\begin{align*}
 &  \la Q_{m, \kb}^n,  Q_{m', \kb'}^{n'} \ra_W  = c_W \int_{\VV^{d+1}}
      Q_{m, \kb}^n (\vc x,t)Q_{m', \kb'}^{n'} (\vc x,y)  w_1(\phi (t) ) w_2 \left(\frac{\vc x}{\phi(t)}\right)\d \vc x\, \d t  \\
 & \qquad = c_w   \int_S q_{n-m}^{(2m+d)}(t) q_{n'-m'}^{(2 m'+d)}(t) |\phi(t)|^{2m +d} w_1(t) \d t
            \int_{\BB^d} P_\kb^m(\vc u) P_{\kb'}^{m'}(\vc u) w_2(\vc u) \d \vc u \\
   &\qquad = h_m^n  \delta_{n,n'} \delta_{m,m'} \delta_{\kb,\kb'}.  
\end{align*}
Thus, $\{Q_{m,\kb}^n\}$ is an orthogonal set and they form a basis for $\CV_n(\partial\VV^{d+1})$. 
\end{proof}

 
Below we give several examples for specific quadratic domains of revolution. We shall always assume that
$\{P_\kb^m: |\kb| = m, \, \kb\in \NN_0^d\}$ denotes an orthonormal basis of $\CV_n^d(\varpi_\mu)$ for the
classical weight function \eqref{eq:w-ball} on the unit ball. 

\subsection{Cylinder}
We choose $\phi (t) = 1$. When $w_1(t) = t^{2\mu-1} w_{\a,\b}(t)$ on $[-1,1]$, where $w_{\a,\b}$ is the
Jacobi weight function and let $w_2 = \varpi_\mu$ be the classical weight function \eqref{eq:w-ball} on 
the unit ball. Then the weight function $W$ is given by 
$$
  W(\vc x,t) =  (1-t)^\a (1+t)^\b (1- \|\vc x\|^2)^{\mu-\f12},
$$
the orthogonal polynomials are given in terms of the Jacobi polynomials by 
$$
   Q_{m,\kb}^n(\vc x,t) = P_{n-m}^{(\a,\b)}(t) P_\kb^m(\vc x), \qquad  
$$
which are orthogonal on the cylinder in $\RR^{d+1}$,   
$$
   \VV^{d+1} =  \{(\vc x,t): \|\vc x\| \le 1, \,\, \vc x \in \RR^d, \,\, -1 \le t \le 1\} = \BB^d \times [-1,1]. 
$$

\subsection{Cone}
We choose $\phi (t) = t$. Let $w_1(t) = t^{2\mu-1} u_{\a,\b}(t)$ be defined on $(0,1)$, where $u_{\a,\b}$
is the Jacobi weight on $(0,1)$ in \eqref{eq:Jacobi01}, and let $w_2 = \varpi_\mu$. Then the weight function 
$W$ is given by 
$$
  W(\vc x,t) = t^{2\mu-1}u_{\a,\b}(t) \left (1-\frac{\|\vc x\|^2}{t^2} \right)^{\mu-\f12}
           = t^\a (1-t)^\b (t^2 - \|\vc x\|^2)^{\mu-\f12},  
$$
and the polynomials $Q_{m,\kb}^n$ in \eqref{eq:OPbasis} are given in terms of the Jacobi polynomials by
$$
  Q_{m,\kb}^{n,\mu}(\vc x,t) = P_{n-m}^{(2m+2\mu+\a+d-1,\b)}(1-2t) t^m
       P_{\kb}^m \left(\frac{\vc x}{t}\right), \quad |\kb| = m, \, 0\le m \le n,
$$
which are orthogonal on the solid cone in $\RR^{d+1}$, 
$$
   \VV^{d+1} = \{(\vc x,t): \|\vc x\| \le t, \,\, \vc x \in \RR^d, \,\, 0 \le t \le 1\}.
$$
When $\a = 0$, these polynomials are eigenfunctions of a second order differential operator \cite{X}.  

\subsection{Double cone}
We choose $\phi (t) = t$. Let $w_1(t)=|t|^{2\mu-1} (1-t^2)^{\b}$ be defined on $(-1,1)$, and let $w_2 = \varpi_\mu$. 
Then the weight function $W$ is given by 
$$
  W(\vc x,t) = t^{2\mu-1}(1-t^2)^\b \left (1-\frac{\|\vc x\|^2}{t^2} \right)^{\mu-\f12}
           =  (1-t^2)^\b (t^2 - \|\vc x\|^2)^{\mu-\f12},  
$$
 and the polynomials $Q_{m,\kb}^n$ in \eqref{eq:OPbasis} are given in terms of the generalized Gegenbauer
 polynomials associated to \eqref{eq:GGweight} by 
$$
  Q_{m,\kb}^n(\vc x,t) = C_{n-m}^{(\b+\f12, m+\mu+\f{d-1}{2})}(t) t^m P_{\kb}^m \left(\frac{\vc x}{t}\right), \quad |\kb| = m, \, 0\le m \le n,
$$
which are orthogonal on the double cone of $\RR^{d+1}$, 
$$
   \VV^{d+1} = \{(\vc x,t): \|\vc x\| \le |t|, \,\, \vc x \in \RR^d, \,\, -1 \le t \le 1\}.
$$

\subsection{Ball}
We choose $\phi(t) = \sqrt{1-t^2}$ for $t \in (0, 1)$. if $w_1(t) = (1-t^2)^{\mu-\f12}$ and $w_2=\varpi_\mu$ is 
the classical weight function on the unit ball, then 
$$
 W(\vc x,t) =  (1-t^2)^{\mu-\f12} \left (1-\frac{\|\vc x\|^2}{1-t^2} \right)^{\mu-\f12} =  (1- \|\vc x\|^2 - t^2)^{\mu-\f12}
$$
is the classical weight function on the unit ball in $\RR^{d+1}$. 

\subsection{Paraboloid of revolution}
We choose $\phi(t) = \sqrt{t}$ for $t \in (0, 1)$. If $w_1(t) = t^{\mu-\f12} u_{\a,\b}$ for $t\in (0, 1)$ and $w_2 = W_\mu$ is the classical 
weight function on the unit ball, then 
$$
   W(\vc x,t) =  t^{\mu-\f12} u_{\a,\b}(t) \left (1-\frac{\|\vc x\|^2}{t} \right)^{\mu-\f12} =  t^\a (1-t)^\b (t - \|\vc x\|^2)^{\mu-\f12},
$$
and the polynomials $Q_{m,\kb}^n$ in \eqref{eq:OPbasis} are given in terms of the generalized Gegenbauer
 polynomials by 
$$
  Q_{m,\kb}^n(\vc x,t) = P_{n-m}^{(m+\mu+\a+\f{d-1}{2},\b)}(1-2 t) t^{\f{m}{2}} P_{\kb}^m \left(\frac{\vc x}{\sqrt{t} }\right), \quad |\kb| = m, \, 0\le m \le n,
$$
which are orthogonal on the paraboloid of $\RR^{d+1}$, 
$$
   \VV^{d+1} = \left \{(\vc x,t): \|\vc x\|^2 \le t, \,\, 0 \le t \le 1, \,\, \vc x \in \RR^d \right \}.
$$
 
\subsection{Hyperboloid of revolution}
We choose $\phi(t) = \sqrt{t^2+ \rho^2}$ for $t \in (0, 1)$. If $w_1(t) =(t^2+\rho^2)^{\mu-\f12} w_{\a,\b}(t)$ for $t\in (-1, 1)$ and $w_2 = \varpi_\mu$ is the classical 
weight function on the unit ball, then 
$$
 W(\vc x,t) = (t^2+ \rho^2)^{\mu-\f12}  w_{\a,\b}(t)\left (1-\frac{\|\vc x\|^2}{t^2+\rho^2} \right)^{\mu-\f12} 
    = (1-t)^\a(1+t)^\b (t^2 + \rho^2 - \|\vc x\|^2)^{\mu-\f12}.
$$
The polynomial $q_{n-m}^{(2m+d)}$ that appears in $Q_{m,\kb}^n$ in \eqref{eq:OPbasis} need to be an orthogonal polynomial with respect to 
$$
    (1-t)^\a(1+t)^\b ({t^2+ \rho^2})^{m + \mu+ \frac{d-1}{2}} 
$$ 
on $[-1,1]$, which however cannot be written in a simple formula of the classical orthogonal polynomials. The polynomials $Q_{m,\kb}^n$ are
orthogonal on the hyperboloid of $\RR^{d+1}$
$$
   \VV^{d+1} =  \left \{(\vc x,t): \|\vc x\|^2 \le t^2+\rho^2,  \,\, a \le t \le b, \,\, \vc x \in \RR^d\right\}.
$$
 
\medskip

\section{Cubature rules on or in quadratic surfaces}
\setcounter{equation}{0}

We now construct cubature rules to discretize integrals on or in quadratic surfaces. Let $\Omega$ be a domain
in $\RR^{d}$. A cubature rule of degree $M$ for an integral $\int_\Omega f(\vc x) W(\vc x) d\vc x$ is a finite sum of 
function evaluations, 
$$
  \int_\Omega f(\vc x) W(\vc x) d\vc x = \sum_{k=1}^N \l_\k f(\vc x_k), \qquad \l_k \in \RR, \quad \vc x_k \in \RR^d, 
$$
where the equality holds if $f$ is any polynomial in $\Pi_M^d$. The numbers $\l_k$ are usually called 
weights of the cubature rule and $\vc x_k$ are called nodes. 


\subsection{Cubature rules on the surface}
Let $\phi: (a,b) \to \RR_+$, $b > a \ge 0$, be a weight function that satisfies either one of the assumptions in
Definition \ref{def:VV}. For the quadratic surface $\partial \VV^{d+1}$ defined by $\phi$, cubature rules on the 
surface can be constructed via cubature rules on the unit sphere and quadrature rules on the interval. 

First we need a cubature rule of degree $2n-1$ on the unit sphere. Such a rule takes the form
\begin{equation} \label{eq:cubaSph}
  \int_\sph f(\vc x) \d \s(\vc x) = \sum_{k=1}^{N_n} \mu_{k} f(\xi_k), \qquad \xi_k \in \sph, \quad \mu_k > 0,  
\end{equation}
and the equality holds for all $f \in \Pi_{2n-1}(\sph)$. In explicit construction, the sum in \eqref{eq:cubaSph} is
usually needed to be written as a multiple of sums. An explicit construction of such a cubature rule can be 
derived using spherical coordinates, which allows us to write the integral over $\sph$ as $(d-1)$ folds of integrals
of one variable, each can be approximated by a quadrature rule. The ensuing cubature rule is the usual product 
type cubature rule of degree $2n-1$ on $\sph$ (cf. \cite[Theorem 6.2.2]{DaiX}).  In this case, $N_n = n^d$. 

Let $p_n^\phi(t)$ be the orthogonal polynomials of degree $n$ with respect to the 
weight function $|\phi(t)|^{d-1}w(t)$ on $(a,b)$. It is known that $p_n^\phi$ has $n$ distinct zeros in $(a,b)$, 
which we denote by $t_{k}$, $1\le k \le n$. The Gauss quadrature for the integral $\int_a^b g(t) |\phi(t)|^{d-1}w(t) \d t$
is then given by  
\begin{equation} \label{eq:GaussQuad}
     \int_a^b g(t) |\phi(t)|^{d-1} w(t) \d t =  \sum_{k=1}^n \nu_{k} g(t_{k}), \qquad g \in \Pi_{2n-1},
\end{equation}
where $\Pi_m = \Pi_m^1$. It is known that $\nu_{k} > 0$ as it satisfies
$$
   \nu_{k} = \frac{1}{\sum_{k=1}^{n-1} [p_k(\phi;t_{k})]^2}.
$$ 

\begin{thm}\label{thm:sfCuba}
Let $\xi_k, \mu_{k}$ be as in \eqref{eq:cubaSph} and $t_j, \nu_{j}$ be as in \eqref{eq:GaussQuad}. Then
\begin{align} \label{eq:sfConeCuba}
  \int_{\partial \VV^{d+1}} f(\vc x,t) w(t) \d \s(\vc x,t) = \sum_{j=1}^n \nu_{j} \sum_{k=1}^{N_n} \mu_{k} 
        f(t_{j} \xi_k, t_{j})    
\end{align}
is a cubature rule of degree $2n-1$ for $w(t) \d \s(\vc x,t)$ on $\partial \VV^{d+1}$. 
\end{thm}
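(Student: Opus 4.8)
The plan is to verify exactness of the product rule \eqref{eq:sfConeCuba} directly on a spanning set of $\Pi_{2n-1}(\partial\VV^{d+1})$. Since both sides are linear in $f$, it suffices to treat the monomials $f(\vc x,t) = \vc x^\b t^\ell$ with $\b \in \NN_0^d$ and $|\b| + \ell \le 2n-1$. First I would invoke the surface identity \eqref{eq:int-surface}: parametrizing $\vc x = |\phi(t)|\xi$ with $\xi \in \sph$ and using $\vc x^\b = |\phi(t)|^{|\b|}\xi^\b$ turns the left-hand side into a product of a spherical moment and a one-dimensional integral,
\begin{equation*}
  \int_{\partial\VV^{d+1}} \vc x^\b t^\ell\, w(t)\, \d\s(\vc x,t)
     = \Big(\int_\sph \xi^\b\, \d\s(\xi)\Big)\Big(\int_S |\phi(t)|^{|\b|+d-1}\, t^\ell\, w(t)\, \d t\Big).
\end{equation*}
In the same way the right-hand side of \eqref{eq:sfConeCuba} factors, with node $|\phi(t_j)|\xi_k$ in the spatial slot, as
\begin{equation*}
  \sum_{j=1}^n \nu_j \sum_{k=1}^{N_n} \mu_k \big(|\phi(t_j)|\xi_k\big)^\b t_j^\ell
      = \Big(\sum_{k=1}^{N_n}\mu_k \xi_k^\b\Big)\Big(\sum_{j=1}^n \nu_j |\phi(t_j)|^{|\b|} t_j^\ell\Big),
\end{equation*}
so the whole claim decouples into a spherical factor and a one-dimensional factor.

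The step needing care is that $|\phi(t)|^{|\b|}$ is a genuine polynomial in $t$ only when $|\b|$ is even, and this is where I would split into cases. If some component of $\b$ is odd, then the reflection symmetry $\xi_i \mapsto -\xi_i$ of $\sph$ forces $\int_\sph \xi^\b\,\d\s(\xi) = 0$; moreover, since $|\b| \le 2n-1$ the spherical rule \eqref{eq:cubaSph} is exact on $\xi^\b$, whence $\sum_k \mu_k \xi_k^\b = \int_\sph \xi^\b\,\d\s(\xi)=0$ as well. Thus both sides vanish and agree, and the troublesome non-polynomial factor $|\phi(t)|^{|\b|}$ is never integrated. If instead every component of $\b$ is even, then $|\b|$ is even and $|\phi(t)|^{|\b|} = (\phi(t)^2)^{|\b|/2}$ is a polynomial; here the hypothesis $\deg \phi^2 \le 2$ of Definition \ref{def:VV} guarantees $\deg |\phi|^{|\b|} \le |\b|$, so that $g(t) := |\phi(t)|^{|\b|}t^\ell$ lies in $\Pi_{2n-1}$ with $\deg g \le |\b|+\ell \le 2n-1$.

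It then remains to feed the two exactness properties into the factored identity. The spherical rule gives $\sum_k \mu_k \xi_k^\b = \int_\sph \xi^\b\,\d\s(\xi)$, matching the spherical factors, and the Gauss rule \eqref{eq:GaussQuad} applied to $g \in \Pi_{2n-1}$ gives
\begin{equation*}
   \sum_{j=1}^n \nu_j |\phi(t_j)|^{|\b|} t_j^\ell = \int_a^b |\phi(t)|^{|\b|} t^\ell\, |\phi(t)|^{d-1} w(t)\,\d t = \int_S |\phi(t)|^{|\b|+d-1}\, t^\ell\, w(t)\,\d t,
\end{equation*}
matching the one-dimensional factors. Multiplying the two matched factors reproduces the left-hand side, which completes the verification on monomials and hence, by linearity, on all of $\Pi_{2n-1}(\partial\VV^{d+1})$. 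The only genuine obstacle is the non-polynomial factor $|\phi(t)|$ coming from odd-order monomials; the whole argument hinges on the observation that such monomials are annihilated simultaneously by the symmetry of the sphere and by the exactness of the spherical rule, so that one is never forced to apply Gauss quadrature to a non-polynomial integrand.
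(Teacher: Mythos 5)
Your proof is correct, and it takes a genuinely different route from the paper's. The paper verifies exactness on the orthogonal basis \eqref{eq:sfOPbasis}: for $f = S_{m,\ell}^r$ with $0 \le m \le r \le 2n-1$, the right-hand side of \eqref{eq:sfConeCuba} factors into a $t$-sum times the spherical sum $\sum_k \mu_k Y_\ell^m(\xi_k)$; for $m \ge 1$ this spherical sum vanishes (exactness of \eqref{eq:cubaSph} plus orthogonality of $Y_\ell^m$ to constants), matching the vanishing of $\int_{\partial\VV^{d+1}} S_{m,\ell}^r\, w\, \d\s$ by orthogonality, while the $m=0$ case reduces to the Gauss rule \eqref{eq:GaussQuad} applied to $q_r^{(0)} \in \Pi_{2n-1}$. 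You instead verify exactness on the monomials $\vc x^\b t^\ell$, which obviously span, so your argument does not rely on the Section~4 results that the $S_{m,\ell}^r$ restricted to the surface span $\Pi_{2n-1}(\partial\VV^{d+1})$ --- this makes your proof more self-contained, at the cost of the parity case analysis. Structurally the two proofs handle the same difficulty the same way: a potentially non-polynomial $t$-factor ($|\phi(t)|^{|\b|}$ for you, $\phi(t)^m$ for the paper) is never fed to the Gauss rule, because it appears multiplied by a spherical sum that both rules annihilate (by reflection symmetry of $\sph$ in your case, by orthogonality of $Y_\ell^m$ to constants in the paper's). One further point in your favor: you read the spatial node as $|\phi(t_j)|\xi_k$, which is the correct reading --- the nodes must lie on $\partial\VV^{d+1}$; the expression $f(t_j\xi_k,t_j)$ as printed in the theorem is literally correct only for the cone $\phi(t)=t$, and the paper's own proof also implicitly evaluates at $(\phi(t_j)\xi_k,t_j)$.
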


\begin{proof}
We verify that the cubature rules is exact for $f = S_{m,\ell}^r = q_{r-m}^{(m)} (t)Y_\ell^m (\vc x)$ in 
\eqref{eq:sfOPbasis} for $0 \le r \le 2n-1$. For such a $f$, the righthand side of \eqref{eq:sfConeCuba}
becomes
\begin{align} \label{eq:sfConeCuba1}
\sum_{j=1}^n \nu_{j} \sum_{k=1}^{N_n} \mu_{k}  S_{m,\ell}^r (t_{j} \xi_k, t_{j})
    =  \sum_{j=1}^n \nu_{j} \phi^m(t_{j})q_{r-m}^{(m)} (t_{j}) \sum_{k=1}^{N_n} \mu_{k} Y_\ell^m (\xi_k). 
\end{align}
If $m > 0$, then the last sum is zero by \eqref{eq:cubaSph} for $m \le r \le 2n-1$, which agrees with the 
integral of $S_{m,\ell}^r$ over $\partial \VV^{d+1}$ by the orthogonality of $S_{m,\ell}^r$. If $m =0$, the
sum is equal to 1, so that the righthand side of  \eqref{eq:sfConeCuba} is equal to 
$$ 
\sum_{j=1}^n \nu_{j}  q_{r}^{(0)} (t_{j}) = \int_a^b q_r^{(0)}(t) |\phi(t)|^{d-1} w(t) dt
   =   \int_{\partial \VV^{d+1}} S_{0,\ell}^r(\vc x,t) w(t) \d \s(\vc x,t),
$$
where the first identity follows from \eqref{eq:GaussQuad} since $q_r^{(0)}(t)$ is a polynomial of degree
$r \le 2n-1$, and the second identity follows from \eqref{eq:int-surface}. Putting together, we have verified 
\eqref{eq:sfConeCuba} for $f = S_{m,\ell}^r$ for $0\le m \le r \le 2n-1$ and all $\ell$. 
\end{proof}

For $d =2$, the surface in $\RR^3$, we have the following cubature rule
$$
  \int_{\partial \VV^3} f(\vc x,t) w(t) \d\sigma(\vc x,y)
    = \frac{\pi}{n} \sum_{j=1}^n \nu_{j} \sum_{k=0}^{2n-1}  f(t_{j} \cos \tfrac{k\pi}{n}, t_{j} \sin \tfrac{k\pi}{n}, t_{j}),  
$$
derived using the cubature rule on the circle with equally spaced points, which is exact for all trigonometric
polynomials of degree $2n-1$.

The cubature rule \eqref{eq:cubaSph} is not optimal and its nodes has a high concentration close to the 
origin. For example, consider $w(t) = t^\a(1-t)^\b$ on the surface of the cone in $\RR^3$, so that $\phi(t) = t$, 
then $t_{j}$ are zeros of the Jacobi polynomial $P_n^{(\a+2,\b)}(2t-1)$, which are all in $[0,1]$. In this
case, the nodes of the cubature rule \eqref{eq:sfConeCuba1} are concentrated at the apex of the cone. 

\subsection{Cubature rules on the solid domain}
Next we construct a cubature rule on the solid domain $\VV^{d+1}$ for the weight function
$$
W_\mu(\vc x,t) = w(t)  \left(\phi(t)^2 - \|\vc x\|^2\right)^{\mu - \frac12}, \qquad (\vc x,t) \in \VV^{d+1}, \quad \mu > -\tfrac12, 
$$
which is \eqref{eq:weightVV} with $w_2 = \varpi_\mu$, the classical weight function on the unit ball, and 
$w = w_1 |\phi|^{d+1-2\mu}$.

 We need cubature rules for $\varpi_\mu$ on the unit ball. Denote such a cubature rule of degree $2n-1$ by 
\begin{equation} \label{eq:cubaBall}
   \int_{\BB^d} f(\vc x) \varpi_\mu(\vc x) \d \vc x = \sum_{k=1}^{N_n} \mu_{k} f(\vc x_{k}), \qquad \vc x_k \in \BB^d, 
      \quad \mu_k > 0,  
\end{equation}
where the equality holds for all $f \in \Pi_{2n-1}^d$. An explicit construction of such a cubature rule can be 
derived either in cartesian product or in spherical polar coordinates, both allow us to write the integral over 
$\BB^d$ as $d$-folds of integrals of one variable so that the cubature rule can be written as product of 
cubature rules. In particular, using the identity 
$$
  \int_{\BB^d} f(\vc x) \varpi_\mu(\vc x) \d \vc x = \int_0^1 \int_\sph f(r \xi) \d \s(\xi) r^{d-1} (1-r^2)^{\mu-\f12} \d r,
$$
we can derive \eqref{eq:cubaBall} from that of \eqref{eq:cubaSph}. For example, let 
$s_{k} \in (0,1)$ be the zeros of the Jacobi polynomial $P_n^{(\mu-\f12, 0)}(2t^2-1)$ and
let $\delta_{k}$ be the weights of the Gaussian quadrature rule of degree $2n-1$ based on these zeros. 
Then 
\begin{equation} \label{eq:cubaBall2}
   \int_{\BB^2} f(\vc x) \varpi_\mu(\vc x) \d \vc x = \frac{\pi}{n} \sum_{k=1}^{n} \delta_{k}
     \sum_{j=0}^{2n-1} f \left(s_{k} \cos \tfrac{j\pi}{2n},s_{k} \sin \tfrac{j\pi}{2n}\right)  
\end{equation}
is a cubature rule of degree $2n-1$ for $\varpi_\mu$ on the disk $\BB^2$. 

We now need the Gauss quadrature for the integral $\int_a^b g(t) |\phi(t)|^{d}w(t) \d t$, which we denote
by 
\begin{equation} \label{eq:GaussQuad2}
     \int_a^b g(t) |\phi(t)|^d w(t) \d t =  \sum_{k=1}^n \nu_{k} g(t_{k}), \qquad g \in \Pi_{2n-1}. 
\end{equation}
This is the same as \eqref{eq:GaussQuad} but for the weight function $|\phi(t)|^d w(t)$. 

\begin{thm}\label{thm:coneCuba}
Let $\vc x_k, \mu_{k}$ be as in \eqref{eq:cubaBall} and $t_j, \nu_{j}$ be as in \eqref{eq:GaussQuad2}. Then
\begin{align} \label{eq:ConeCuba}
  \int_{\VV^{d+1}} f(\vc x,t) W_\mu (\vc x,t) \d \vc x \d t = \sum_{j=1}^n \nu_{j} \sum_{k=1}^{N_n} \mu_{k} 
        f(t_{j} \vc x_{k}, t_{j})    
\end{align}
is a cubature rule of degree $2n-1$ for $W_\mu$ on $\VV^{d+1}$. 
\end{thm}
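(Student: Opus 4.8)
The plan is to follow the same template as the proof of Theorem~\ref{thm:sfCuba}: both sides of \eqref{eq:ConeCuba} are linear in $f$, so it suffices to verify the identity on a spanning set of $\Pi_{2n-1}^{d+1}$. The natural choice is the orthogonal basis $\{Q_{m,\kb}^r\}$ of \eqref{eq:OPbasis}. Since $\{Q_{m,\kb}^r: |\kb|=m\le r\}$ is an orthogonal basis of $\CV_r(\VV^{d+1})$ and $\Pi_{2n-1}^{d+1}=\bigoplus_{r=0}^{2n-1}\CV_r(\VV^{d+1})$, the collection with $0\le r\le 2n-1$ is a basis of $\Pi_{2n-1}^{d+1}$; hence I would fix $r\le 2n-1$ and compute both sides for $f=Q_{m,\kb}^r$.

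Next I would substitute $f=Q_{m,\kb}^r$ into the right-hand side of \eqref{eq:ConeCuba}. Under the substitution $\vc x=\phi(t)\vc u$ underlying \eqref{eq:Int-VV}, a node $\vc x_k\in\BB^d$ of the ball rule \eqref{eq:cubaBall} is lifted to $\phi(t_j)\vc x_k$ (which for the cone $\phi(t)=t$ is exactly the $t_j\vc x_k$ appearing in the statement). Because $P_\kb^m(\phi(t_j)\vc x_k/\phi(t_j))=P_\kb^m(\vc x_k)$, evaluation of $Q_{m,\kb}^r=q_{r-m}^{(2m+d)}(t)\phi(t)^m P_\kb^m(\vc x/\phi(t))$ at the lifted node factorizes, and the double sum separates as
\begin{equation*}
  \sum_{j=1}^n \nu_{j}\, q_{r-m}^{(2m+d)}(t_{j})\,\phi(t_{j})^m \sum_{k=1}^{N_n}\mu_{k}\, P_\kb^m(\vc x_k),
\end{equation*}
so the argument reduces to controlling the two one-factor sums separately.

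Finally I would split into the two cases already seen for the surface rule. If $m\ge 1$, then $P_\kb^m$ has degree $m\le r\le 2n-1$, so the ball cubature \eqref{eq:cubaBall}, being exact on $\Pi_{2n-1}^d$, reproduces $\sum_{k}\mu_{k}P_\kb^m(\vc x_k)=\int_{\BB^d}P_\kb^m(\vc u)\varpi_\mu(\vc u)\d\vc u=0$ by orthogonality of $P_\kb^m$ to constants; the whole right-hand side vanishes and agrees with the left-hand side, which is $0$ since $Q_{m,\kb}^r\in\CV_r(\VV^{d+1})$ is orthogonal to $1$ for $r\ge 1$. If $m=0$, then $Q_{0,\mathbf 0}^r(\vc x,t)=q_r^{(d)}(t)$ depends on $t$ alone, the inner sum equals $\sum_{k}\mu_{k}=\int_{\BB^d}\varpi_\mu(\vc u)\d\vc u$, and the right-hand side collapses to $\big(\int_{\BB^d}\varpi_\mu\big)\sum_{j}\nu_{j}q_r^{(d)}(t_{j})$; since $q_r^{(d)}$ is univariate of degree $r\le 2n-1$, the Gauss quadrature \eqref{eq:GaussQuad2} returns its weighted integral, and comparison with \eqref{eq:Int-VV}, which carries the same $\int_{\BB^d}\varpi_\mu$ factor, yields the left-hand side $\int_{\VV^{d+1}}Q_{0,\mathbf 0}^r W_\mu$. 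The main obstacle is exactly this one-dimensional weight bookkeeping in the $m=0$ case: one must confirm that, after integrating out the radial $\varpi_\mu$-part over $\BB^d$ and accounting for the Jacobian $|\phi(t)|^d$ of \eqref{eq:Int-VV}, the weight governing the surviving $t$-integral is precisely the weight of \eqref{eq:GaussQuad2} against which the $q_r^{(d)}$ are orthogonal, so that the degree-$(2n-1)$ Gauss rule reproduces it exactly. By contrast, for $m\ge 1$ no exactness of the $t$-quadrature is needed, since the ball sum already annihilates the term.
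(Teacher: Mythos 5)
Your proposal is correct and is essentially the paper's own argument: the paper's (detail-omitting) proof says precisely to verify the rule on the basis $Q_{m,\kb}^r$ of \eqref{eq:OPbasis} for $|\kb|=m\le r\le 2n-1$ using the identity \eqref{eq:Int-VV} and to follow the template of Theorem~\ref{thm:sfCuba}, which is exactly your case split ($m\ge 1$ annihilated by the ball rule \eqref{eq:cubaBall}, $m=0$ reduced to Gauss exactness in $t$). The weight bookkeeping you flag in the $m=0$ case is indeed the one point requiring care: after the change of variables the surviving $t$-integral carries the weight $|\phi(t)|^{d}w_1(t)$, which is the weight against which $q_r^{(d)}$ is orthogonal, and the rule \eqref{eq:GaussQuad2} closes the argument under the paper's intended identification of $w$ with $w_1$ (the paper's stated relation between them contains a typo in the power of $|\phi|$, but this does not affect your reasoning).
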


\begin{proof}
We verify the cubature rule for the basis $Q_{m,\kb}^r$ defined in \eqref{eq:OPbasis} for $|\kb| = m \le r \le 2n-1$.
Using the identity \eqref{eq:Int-VV}, the proof follows along the line of the proof of Theorem \ref{thm:sfCuba}. We
omit the details. 
\end{proof}

For $d=2$, the cubature rule \eqref{eq:ConeCuba} is a triple sum, using, for example, the cubature rule in
\eqref{eq:cubaBall2}. As in the case of the surface of the cone, the nodes of the this cubature rule have a 
high concentration at the apex of the cone, inherited from the cubature rule on the unit ball.

\section{Function approximation on and inside a cone}

As an application, we consider the problem of function approximation using orthogonal polynomials on the surface and boundary of the cone. A cone is natural geometry that arises in applications; for example, the Taylor cone is used to describe jets of charged particles \cite{Tay}. 

\subsection{Function approximation on the surface of a cone in ${\mathbb R}^3$}
Here we consider approximations of the form:
$$
f(x,y,t) \approx \sum_{n=0}^N \sum_{m=0}^n \sum_{\ell=1}^2 f_{m,\ell}^n S_{m,\ell}^n(x,y,t) 
$$
where $S_{m,\ell}^n$ are OPs on the surface of a cone as defined in \eqref{eq:SurfaceConeDef} with $d = 2$ and parameters $\alpha = \beta = 0$ (for simplicity).  
The approach we take is to observe that the cosine and sine terms treated separately have precisely the same Jacobi polynomial parameters 
as OPs on the triangle, which allows us to use the transform introduced by Slevinsky in \cite{SlTr}, based on his 
spherical harmonics transform in \cite{SlSH}.  First we introduce the  lowering operator $L^{(a,b)}$:

\begin{prop}
Suppose $p$ is a degree $n$ polynomial satisfying $p(1) = 0$. Then  there is an $(n+1) \times n$ matrix $L^{(a,b)}$ satisfying
\begin{align*}
p(t) &=  (1-t) \left[P_0^{(a+2,b)}(t),\ldots,P_{n-1}^{(a+2,b)}(t)\right] \begin{pmatrix} p_0 \\ \vdots \\ p_{n-1} \end{pmatrix} \\
       &=  \left[P_0^{(a,b)}(t),\ldots,P_{n}^{(a,b)}(t)\right] L^{(a,b)} \begin{pmatrix} p_0 \\ \vdots \\ p_{n-1} \end{pmatrix}
\end{align*}
The range of $L^{(a,b)}$ is ${\rm ker}\big(\big[P_0^{(a,b)}(1), \ldots, P_n^{(a,b)}(1)\big]\big)$. 
The coefficients of $L^{(a,b)}$ are given in \cite[Theorem 5.1]{SlTr}.
\end{prop}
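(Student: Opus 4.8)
The plan is to read both displayed identities as statements about coefficient vectors and to exhibit $L^{(a,b)}$ as the matrix of an explicit linear map. First I would use the hypothesis $p(1)=0$: since $p$ has degree $n$ and vanishes at $t=1$, the factor theorem gives $p(t)=(1-t)q(t)$ with $q$ of degree $n-1$. Because $\{P_j^{(a+2,b)}(t)\}_{j=0}^{n-1}$ is a basis of the space of polynomials of degree at most $n-1$, I can write $q(t)=\sum_{j=0}^{n-1}p_j P_j^{(a+2,b)}(t)$, and these coefficients are precisely the entries of the column vector $(p_0,\dots,p_{n-1})^{T}$ in the statement. This establishes the first displayed equality by construction.

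Next I would define $L^{(a,b)}$ columnwise. Each $(1-t)P_j^{(a+2,b)}(t)$ is a polynomial of degree $j+1\le n$, hence has a unique expansion $(1-t)P_j^{(a+2,b)}(t)=\sum_{k=0}^{n}L_{k,j}^{(a,b)}P_k^{(a,b)}(t)$ in the basis $\{P_k^{(a,b)}(t)\}_{k=0}^{n}$; taking these connection coefficients as the $j$-th column produces an $(n+1)\times n$ matrix. Linearity then gives $p(t)=\sum_{j}p_j(1-t)P_j^{(a+2,b)}(t)=\sum_{k=0}^{n}\big(\sum_j L_{k,j}^{(a,b)}p_j\big)P_k^{(a,b)}(t)$, which is exactly the second displayed equality. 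The explicit entries follow by composing the connection coefficients that raise the first Jacobi parameter from $a$ to $a+2$ with the tridiagonal matrix of multiplication by $1-t$ in the $P^{(a,b)}$ basis (read off from the three-term recurrence); this computation is carried out in \cite[Theorem 5.1]{SlTr}, which I would simply cite.

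For the range I would argue by a dimension count. Every vector in the image of $L^{(a,b)}$ is the $P^{(a,b)}$-coefficient vector of some $p=(1-t)q$, and any such $p$ satisfies $p(1)=0$; since $p(1)=\sum_k c_k P_k^{(a,b)}(1)=\big[P_0^{(a,b)}(1),\dots,P_n^{(a,b)}(1)\big]\,(c_0,\dots,c_n)^{T}$, the image is contained in $\ker\big(\big[P_0^{(a,b)}(1),\dots,P_n^{(a,b)}(1)\big]\big)$. Conversely, multiplication by $(1-t)$ is injective and the two coordinate maps are isomorphisms, so $L^{(a,b)}$ has full column rank $n$; and since $P_0^{(a,b)}(1)=1\ne 0$, the row vector of evaluations is nonzero, so its kernel also has dimension $n$. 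Equality of the two $n$-dimensional spaces follows. The only genuinely substantive step is the explicit determination of the entries of $L^{(a,b)}$, which is why I would lean on \cite[Theorem 5.1]{SlTr}; the existence and the range characterization are then routine linear algebra.
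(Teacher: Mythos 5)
Your proof is correct; in fact the paper offers no proof of this proposition at all --- it is stated bare, with the explicit entries of $L^{(a,b)}$ deferred to \cite[Theorem 5.1]{SlTr}, exactly as you defer them --- so your argument simply supplies the routine linear algebra (factor theorem, columnwise definition of $L^{(a,b)}$ by connection coefficients, full column rank) that the authors left implicit. One small simplification: the range identity needs no dimension count, since a coefficient vector lies in $\ker\bigl(\bigl[P_0^{(a,b)}(1),\ldots,P_n^{(a,b)}(1)\bigr]\bigr)$ exactly when the corresponding degree-$\le n$ polynomial vanishes at $t=1$, which by the factor theorem happens exactly when it equals $(1-t)q$ with $\deg q\le n-1$, i.e.\ lies in the image of $L^{(a,b)}$.
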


The transform consists of a two-stage process:

\def\cossin#1{\begin{cases}
\cos m #1 & \ell = 1 \\
\sin m #1 & \ell = 2
\end{cases}}
\begin{enumerate}
\item Expand in a tensor product basis: 
$$
f(x,y,t) \approx\sum_{k=0}^N \sum_{m=0}^N \sum_{\ell=1}^2c_{m,k,\ell}^N P_k^{(1,0)}(1-2t)  \cossin\t
$$
where   we assume $c_{0,k,2} = 0$ and $x = t \cos \theta$, $y = t \sin \theta$. 
This can be accomplished efficiently by quadrature on a tensor product grid of $N+1$ Gauss--Jacobi points 
and $2N+1$ evenly spaced points on the circle, $2N+1$ evenly spaced points on the circle:
\begin{align*}
c_{m,k,\ell}^N &:= {1 \over 2N+1} \sum_{\nu =1}^{N+1} \sum_{\eta=1}^{2N+1} w_\nu 
  f(t_\nu, \cos \theta_\eta, \sin \theta_\eta)\times \\
&\qquad   P_k^{(1,0)}(t_\nu) \cossin{\theta_\eta}
\end{align*}
where $(x_j,w_j)$ are the Gauss--Jacobi quadrature points and weights and $\theta_\eta = {2 \pi \eta \over 2N+1}$.  
Alternatively,  we can use the FFT and recent fast Jacobi transforms \cite{SlJac,TWO} which have quasi-optimal 
complexity. 
\item Convert from the tensor product basis to the orthogonal polynomial basis, by inverting:
$$
L \begin{pmatrix}
f_{m,\ell}^0 \\
\vdots \\
f_{m,\ell}^{N-m} 
\end{pmatrix}	=  \begin{pmatrix}
c_{0,m,\ell}^N \\
\vdots \\
c_{N,m,\ell}^N
\end{pmatrix}
$$
where $L := L^{(1,0)} L^{(3,0)}  \cdots L^{(2m-3,0)} L^{(2m-1,0)} $. 
\end{enumerate}

This procedure exactly recovers polynomials. Note this is distinct from polynomial interpolation as we oversample. 

\begin{lem}\label{lem:exactpoly}
Suppose $f(x,y,t)$ is a polynomial of degree $N$. Then the above procedure recovers the coefficients 
$\{f_{m,\ell}^n\}$ exactly. 	
\end{lem}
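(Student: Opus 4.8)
The plan is to reduce the claim one angular mode at a time to the algebra of the lowering operators, and then to check that the quadrature of Stage~1 reproduces the continuous tensor-product coefficients without error. First I would record the target representation. Since $f$ has degree $N$, the basis $\{S_{m,\ell}^n\}$ from \eqref{eq:SurfaceConeDef} spans the restriction to $\partial\VV^3$ of all polynomials of degree $\le N$, so there are unique coefficients with $f=\sum_{n=0}^N\sum_{m=0}^n\sum_{\ell=1}^2 f_{m,\ell}^n S_{m,\ell}^n$ on the surface. Substituting $x=t\cos\t$, $y=t\sin\t$ and collecting the $\cos m\t$ (resp.\ $\sin m\t$) terms, the radial part of the $m$th angular mode is $a_{m,\ell}(t)=t^m g_{m,\ell}(t)$, where $g_{m,\ell}(t)=\sum_{n=m}^N f_{m,\ell}^n P_{n-m}^{(2m+1,0)}(1-2t)$ has degree $N-m$. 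Thus $a_{m,\ell}$ has degree $\le N$ and vanishes to order $m$ at $t=0$.

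Next I would identify the matrix $L$ with multiplication by $t^m$. The polynomials $P_k^{(2m+1,0)}(1-2t)$ are orthogonal for the weight $t^{2m+1}$ on $(0,1)$, matching the radial weight $\phi(t)^{2m+d-1}w(t)=t^{2m+1}$ of \eqref{eq:sfOPbasis}, while the tensor-product radial functions $P_k^{(1,0)}(1-2t)$ correspond to the $m=0$ weight $t$. Because the factor $(1-s)$ of the preceding Proposition equals $2t$ under $s=1-2t$, one application of $L^{(a,0)}$ multiplies a combination of the $P_j^{(a+2,0)}(1-2t)$ by $t$ (up to a fixed constant) and re-expands it in the $P_k^{(a,0)}(1-2t)$; composing the $m$ operators in $L=L^{(1,0)}L^{(3,0)}\cdots L^{(2m-1,0)}$ therefore sends the $P^{(2m+1,0)}(1-2t)$-coefficients of $g_{m,\ell}$ to the $P^{(1,0)}(1-2t)$-coefficients of $t^m g_{m,\ell}=a_{m,\ell}$. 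Hence the continuous tensor-product coefficients of $f$ satisfy exactly $L(f_{m,\ell}^0,\ldots,f_{m,\ell}^{N-m})^{T}=(c_{0,m,\ell}^N,\ldots,c_{N,m,\ell}^N)^{T}$, which is the system inverted in Stage~2.

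Then I would show that the discrete inner products of Stage~1 return precisely these continuous coefficients. The surface measure on the cone is $t\,\d t\,\d\t$, and $f$ lies in the span of the functions $P_k^{(1,0)}(1-2t)\cos m\t$ and $P_k^{(1,0)}(1-2t)\sin m\t$ with $0\le k,m\le N$, because a monomial $x^iy^jt^k$ restricts to $t^{i+j+k}\cos^i\t\sin^j\t$, of radial degree $\le N$ and angular frequency $\le N$. Thus every product entering the formula for $c_{m,k,\ell}^N$ has Jacobi radial degree $\le 2N$ and trigonometric degree $\le 2N$. The Gauss--Jacobi rule on $N+1$ nodes is exact to degree $2N+1$ for the weight $t$, and the $2N+1$ equally spaced angular nodes integrate trigonometric polynomials of degree $\le 2N$ exactly, so (up to the fixed normalizations in the formula) the quadrature reproduces the continuous $c_{m,k,\ell}^N$ with no error. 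Finally, Stage~2 solves $L\mathbf f=\mathbf c$: this system is consistent because the true coefficient vector $\mathbf f=(f_{m,\ell}^n)$ is a solution by the previous step, and it has no other solution because each $L^{(a,0)}$ has full column rank (its range is the kernel of the evaluation functional of the Proposition), so $L$ is injective. Hence the recovered coefficients coincide with the $f_{m,\ell}^n$.

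I expect the main obstacle to be the bookkeeping in the middle step: checking carefully that $L^{(1,0)}\cdots L^{(2m-1,0)}$ really implements multiplication by $t^m$ together with the change of Jacobi parameter $2m+1\to 1$, with the degree counts and the order-$m$ vanishing at the apex $t=0$ matching up. The quadrature-exactness step is comparatively routine once the degrees are tracked, and the injectivity of $L$ is immediate from the range characterization in the Proposition, so consistency of the final linear system needs no separate argument beyond exhibiting $\mathbf f$ as a solution.
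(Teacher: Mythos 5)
Your proposal is correct and takes essentially the same approach as the paper: exactness of the Stage-1 quadrature (via degree counting and angular orthogonality), identification of $L = L^{(1,0)}\cdots L^{(2m-1,0)}$ as multiplication by $t^m$ with Jacobi-parameter lowering, and injectivity of $L$ so that inverting the consistent system $L\mathbf{f}=\mathbf{c}$ returns the true coefficients. The paper organizes the same argument by reducing to a single term $S_{m,\ell}^n$ via linearity and is terser on the last step (saying only that the right-hand side ``lies in the range of $L$''), which you spell out more carefully.
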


\begin{proof}
Linearity implies that it suffices to consider a single term $S_{m,\ell}^n(x,y,t)$. In the first stage, orthogonality 
guarantees that only one $m$  mode remains, and the exactness of quadrature ensures that 
$$
S_{m,\ell}^n(x,y,t) = \sum_{k=0}^N c_{m,k,\ell}^N P_k^{(1,0)}(t) Y_\ell^m(x,y).
$$
In other words
$$
 \sum_{k=0}^N c_{m,k,\ell}^N P_k^{(1,0)}(1-2t)  = t^m P_{n-m}^{(2m + 1,0)}(1-2t).
$$
This lies in the range of $L$ and therefore $L$ is  invertible.
\end{proof}

To leverage the existing code  for triangle transforms, we approach the second part of the transform 
by forming two functions on the triangle:
$$
f_\ell(x,y) :=  \sum_{m=0}^N \sum_{k=0}^N c_{m,k,\ell}^N P_k^{(1,0)}(x) T_m(y/(1-x)).
$$
Applying Slevinsky's transform computes
$$
f_\ell(x,y) = \sum_{n=0}^N \sum_{k=0}^n f_{k,\ell}^n P_{n-k}^{(2m+k+1,0)}(x) (1-x)^k T_k(y/(1-x))  .
$$
Slevinsky has shown that this algorithm is fast and stable, and achieves quasi-optimal complexity \cite{SlTr,SlSH}. 

\begin{figure}[tb] \label{fig:approxfun}
  \begin{center}
    \includegraphics[width=0.45\textwidth]{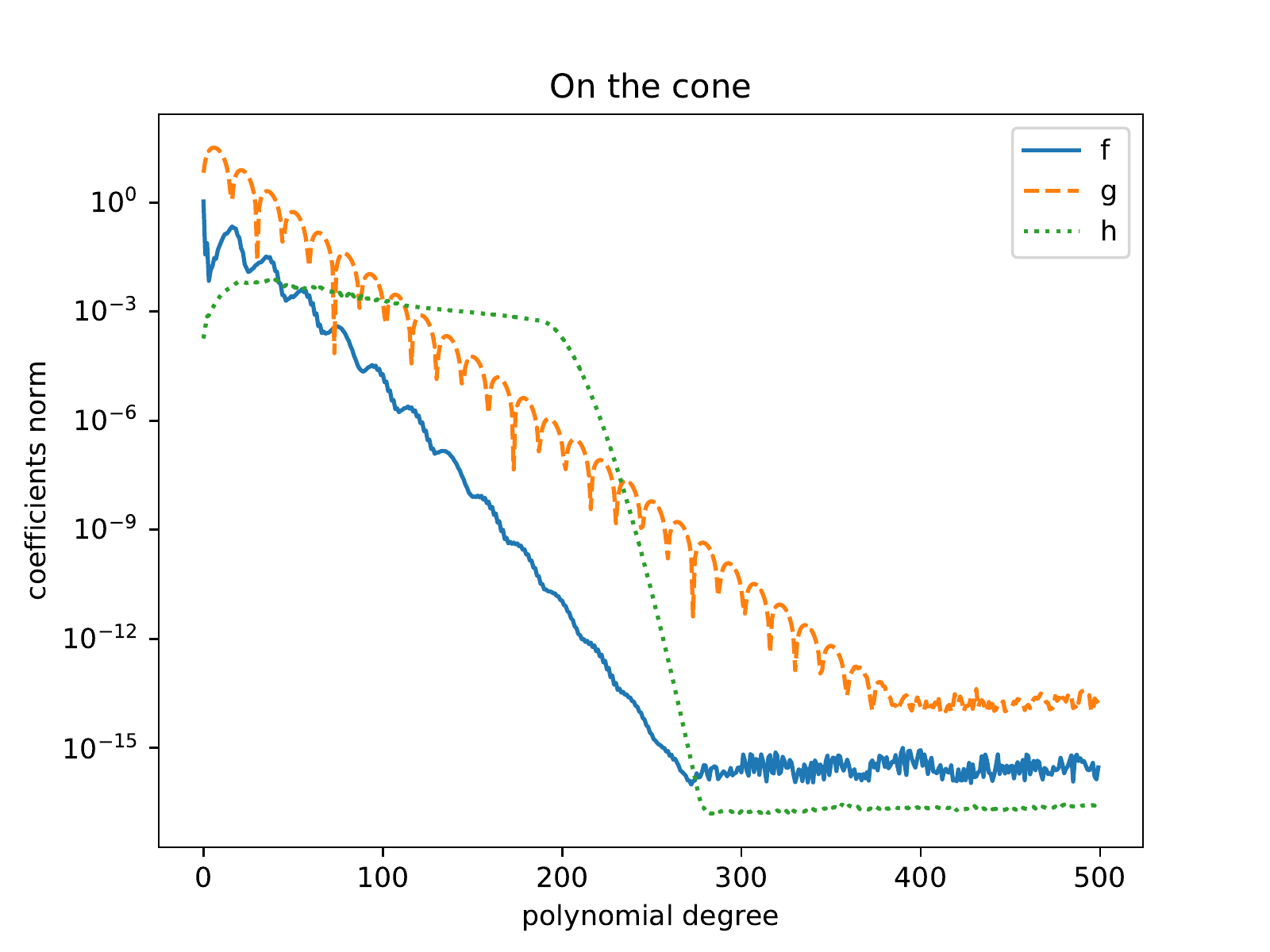}
    \includegraphics[width=0.45\textwidth]{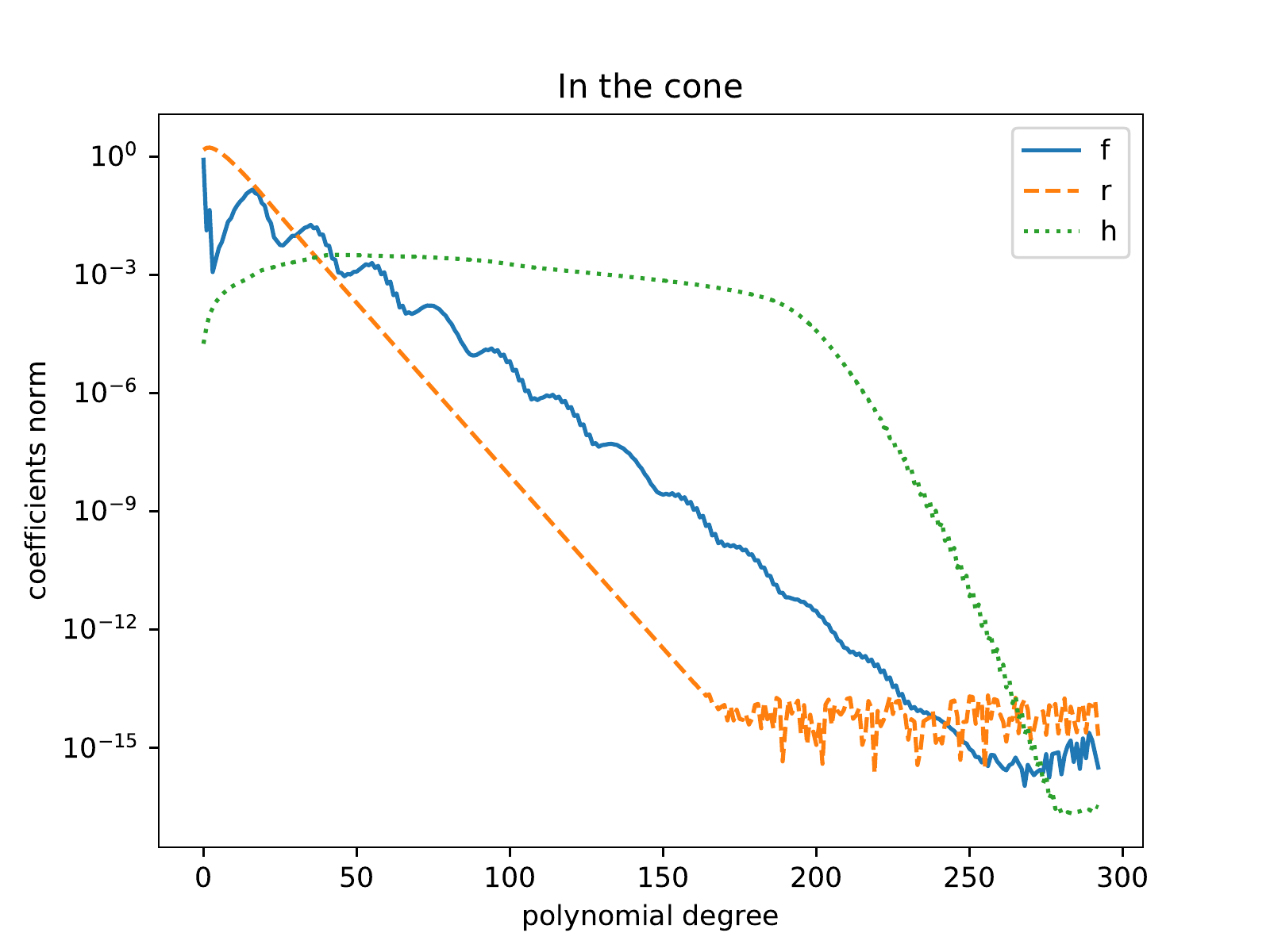}    
  \end{center}
  \caption{Decay of numerically calculated coefficients on (left) and in (right) the cone,  for $f(x,y,t) = \E^{x \cos(20y-t)}$, 
    $g(x,y,t) = {1 \over x^2 + y^2 + (t-0.02)^2}$, $h(x,y,t) = {\cos 100x(y-1) \over 1 + 50t}$, and     $r(x,y,t) = {1 \over x^2 + y^2 + (t+0.02)^2}$. We plot the 2-norm of the degree $n$ coefficients.   }
\end{figure}
 
As an application, consider approximation of a function on the disk of the form:
$$
F(x,y) = f\big(x,y,\sqrt{x^2 + y^2}\big)
$$
where $f(x,y,t)$ is smooth. We can employ the algorithm just constructed to efficiently expand $f$ on the cone, whereas attempting to approximate directly on the disk will have difficulties with the singularity at the origin. In Figure~\ref{fig:approxfun} we plot the decay of the norm of the degree-$n$ coefficients, that is, 
$$
\sqrt{\sum_{\ell=1}^2 \sum_{k=0}^n (f_{k,\ell}^n)^2 }
$$
 for three choices of $f$. Mimicking the univariate case, functions holomorphic in $x,y,t$ in a neighbourhood of the cone are observed to achieve exponential decay in their coefficients, with the location of the closest singularity dictating the rate of decay. Entire functions in $x,y,t$ are observed to achieve super-exponentially decaying coefficients. The rate of decay in the  coefficients is also indicative of the rate of pointwise convergence of the approximation. 

\subsection{Function approximation inside a cone in ${\mathbb R}^3$}

The procedure above extends to inside the cone with orthogonal polynomials in the ball replacing Fourier series, which are efficiently calculated using Slevinsky's transform \cite{SlTr}. We focus on the special case of $d = 2$ and $\a = \b = \mu = 0$.  We have a three-stage process:

\begin{enumerate}
\item Expand in a tensor product basis:
$$
f(x,y,t) \approx\sum_{k=0}^N \sum_{j=0}^N\sum_{m=0}^N \sum_{\ell=1}^2 c_{m,k,j,\ell}^N   P_k^{(2,0)}(t) T_j(r) \cossin\t
$$
where we assume $c_{0,k,j,2} = 0$ and $x = r \cos \theta$, $y = r \sin \theta$. 
This can be accomplished efficiently by quadrature on a tensor product grid of $N+1$ Gauss--Jacobi points (in $t$)  and $2N+1$ evenly spaced points on the circle,
\begin{align*}
c_{m,k,j,\ell}^N := {1 \over 2N+1} \sum_{\nu =1}^{N+1} \sum_{\eta=1}^{N+1} \sum_{\eta=1}^{2N+1} &w_\nu \omega_\eta f(t_\nu, r_\eta \cos \theta_\eta,r_\eta   \sin \theta_\eta)  \times \\
& P_k^{(2,0)}(t_\nu) T_j(r_\eta) \cossin{\theta_\eta}
\end{align*}
where $(t_\nu,w_\nu)$ are the Gauss--Jacobi quadrature points and weights, $(r_\eta,\omega_\eta)$ are Gauss--Chebyshev quadrature nodes and weights,  and $\theta_\ell = {2 \pi \ell \over 2N+1}$.  Alternatively, as in the surface case,  we can combine the FFT, DCT, and recent fast Jacobi transforms \cite{SlJac,TWO} which have quasi-optimal complexity. 
\item Use Slevinsky's fast transform for OPs on the disk \cite{SlTr} to re-expand:
$$
f(x,y,t) \approx\sum_{k=0}^N  \sum_{m = 0}^N \sum_{\ell=0}^m s_{m,k,\ell}^N   P_k^{(2,0)}(t) P_\ell^m(x,y)
$$
\item Convert from the tensor product basis to the orthogonal polynomial basis, by inverting:
$$
 L \begin{pmatrix}
f_{m,\ell}^{N-m} \\
\vdots \\
f_{m,\ell}^{N-m} 
\end{pmatrix}	=  \begin{pmatrix}
s_{0,m,\ell}^N \\
\vdots \\
s_{N,m,\ell}^N
\end{pmatrix}
$$
where $L := L^{(2,0)} L^{(4,0)}  \cdots L^{(2m-2,0)} L^{(2m,0)}$.  
\end{enumerate}

By the same argument as in Lemma~\ref{lem:exactpoly}, this recovers polynomials exactly.  The right-hand side of Figure~\ref{fig:approxfun} shows the norms of the numerically computed coefficients for three functions, also demonstrating the rate of decay is dictated by analyticity properties. 

\noindent {\bf Remark}: This procedure also works for the surface of a cone in ${\mathbb R}^4$, with Slevinsky's fast spherical harmonics transform \cite{SlSH}. 

\section{Conclusion} 

We have constructed orthogonal polynomials on and in quadratic bodies of revolution, which can be viewed as a generalisation of spherical harmonics and orthogonal polynomials in the ball. We have demonstrated that these can be used effectively to construct quadrature rules and to approximate functions. 

\end{document}